
\documentclass{amsart}

\usepackage{amssymb}
\usepackage{mathrsfs}
\usepackage{amsmath}


\newtheorem{Th}{Theorem}[section]
\newtheorem{Cor}[Th]{Corollary}
\newtheorem{Lem}[Th]{Lemma}
\newtheorem{Prop}[Th]{Proposition}

\newtheorem{Rem}[Th]{Remark}

\newtheorem{claim-num}{Claim}


\renewcommand{\le}{\leqslant}
\renewcommand{\ge}{\geqslant}

\def\N{\mathbf N}
\def\Z{\mathbf Z}
\def\aut#1{\operatorname{Aut}(#1)}
\def\rank{\operatorname{rank}}
\def\f{\varphi}
\def\s{\sigma}
\def\Mod#1{\,(\operatorname{mod} #1)}
\def\id{\mbox{\rm id}}
\def\vk{\varkappa}
\def\inv{^{-1}}
\def\str#1{\langle #1 \rangle}
\def\To{\Rightarrow}
\def\sle{\subseteq}

\def\nc{\mathrm{nc}}

\def\cB{\mathscr B}
\def\cC{\mathscr C}

\def\cF{\mathcal F}

\def\cT{\mathscr T}
\def\cU{\mathscr U}

\def\cX{\mathscr X}
\def\cY{\mathscr Y}

\newcounter{corrno}
\setcounter{corrno}{0}

\numberwithin{equation}{section}

\def\btau{\boldsymbol{\tau}}
\def\aR{\mathrm{aR}}

\def\btau{\boldsymbol{\tau}}
\def\aR{\mathrm{aR}}

\def\GammamA{\Gamma_{\!A}(m)}
\def\LambdamA{\Lambda_{\!A}(m)}

\def\GGamma#1{\Gamma_{\!A}(#1)}
\def\LLambda#1{\Lambda_{\!A}(#1)}




\title[Principal congruence subgroups]
 {Principal congruence subgroups in the infinite rank case}

\author{Vladimir A. Tolstykh}

\address{Vladimir A. Tolstykh\\ Department of Mathematics and Computer Science \\ Istanbul Arel University \\ 34537 Tepekent - B\"uy\"uk\c{c}ekmece \\ Istanbul \\ Turkey}
\email{vladimirtolstykh@arel.edu.tr}

\subjclass{20K30 (primary), 20H05, 20F28 (secondary)}






\begin{document}

\maketitle

\begin{abstract}
We obtain a number of analogues of the classical
results of the 1960s on the general
linear groups $\mathrm{GL}_n(\Z)$
and special linear groups $\mathrm{SL}_n(\Z)$ for the
automorphism group $\Gamma_A=\aut A$
of an infinitely generated
free abelian group $A.$
In particular, we obtain
a description of normal generators of the
group $\aut A,$ classify the maximal
normal subgroups of the group $\aut A,$
describe normal generators of the
principal congruence subgroups $\GGamma m$
of the group $\aut A,$ and obtain
an analogue of Brenner's ladder
relation for the group $\aut A.$
\end{abstract}


\section*{Introduction}

In \cite{Brenner} Brenner initiated a program of
study of the structure of the normal subgroups of the general
linear groups $\mathrm{GL}_n(\Z)$ and special
linear groups $\mathrm{SL}_n(\Z).$
In the case when $n \ge 3$ he proved
the following result  \cite[Th. 2]{Brenner}: letting $\Gamma_n$ denote $\mathrm{GL}_n(\Z),$
or $\mathrm{SL}_n(\Z),$ every normal subgroup
$N$ of $\Gamma_n$ is sandwiched
between the normal closure $\Gamma^*_n(m)$ of the elementary
transvection $t_{12}(m) =\mathrm I + m \mathrm I_{12}$
in $\Gamma_n$ and the normal subgroup $\Lambda_n(m)$ of $\Gamma_n$ for a suitable natural
number $m \ge 0,$
$$
\Gamma^*_n(m) \le N \le \Lambda_n(m),
$$
thereby satisfying a relation called the {\it ladder}
relation in \cite{Brenner}. Here $\Lambda_n(m)$ is the subgroup of all matrices
of $\Gamma_n$ that are congruent to a scalar matrix modulo $m.$

Further, it has been conjectured in \cite{Brenner} that in fact
$$
\Gamma^*_n(m)=\Gamma_n(m) \text{ in } \Gamma_n=\mathrm{SL}_n(\Z),
$$
that is, the principal congruence subgroup $\Gamma_n(m)$ of $\mathrm{SL}_n(\Z)$
of level $m,$ consisting of all matrices congruent
to the identity matrix modulo $m$, is equal to the normal closure of the
transvection $t_{12}(m).$ Brenner himself made a significant
contribution towards the proof of the conjecture,
which has been confirmed later by Mennicke \cite{Mennicke}
and by Bass, Lazard, and Serre \cite{BLS}.
As a corollary, we obtain, as it has been also
suggested in \cite{Brenner}, that every subgroup of $\mathrm{SL}_n(\Z)$
of finite index contains the principal
congruence subgroup of some level $m > 0$ (the congruence
subgroup property).

Observe that if $p$ is a prime number,
then the normal subgroup $\Lambda_n(p)$ is
a maximal normal subgroup of the group $\mathrm{SL}_n(\Z)$
and is a lifting of the only maximal normal
subgroup of the special linear
group $\mathrm{SL}_n(\Z/p\Z).$

The automorphism group $\aut A$ of an infinitely generated
free abelian group $A$ naturally takes over both the group $\mathrm{GL}_n(\Z)$
and the group $\mathrm{SL}_n(\Z),$ since
there is no determinant-like function
on $\aut A,$ because it is perfect \cite[Th. 1.5, Prop. 2.1]{To_Berg}.
The aim of the present paper is to obtain
analogues of the above quoted results for
the normal subgroups of the group $\aut A.$
To the best of this author's knowledge,
there are no publications in which
a full-scope program of study of the
structure of the normal subgroups of the group
$\aut A$ similar to Brenner's program has
been addressed (though there is a number of results
on the normal subgroups of finitary
automorphisms in the automorphism groups
of free modules of infinite rank, see,
for instance, \cite{Arrell,Farouqi}).

Let $M$ be a free module of infinite
rank over a ring. Given a
decomposition $M=S \oplus L$
into a direct sum of free submodules,
we call the free submodule $L$ {\it large} (resp. {\it moietous})
if $\rank(S) < \rank(M)$ (resp. $\rank(S)=\rank(L)).$
An automorphism $\f \in \aut M$ is called
{\it almost a radiation}
if $\f$ acts as a radiation
on a large free direct summand of $M.$ According
to Rosenberg \cite[Th. B]{Rosenberg}, if $V$ is
an infinite-dimensional vector space
over a division ring, then the only maximal
normal subgroup of the general linear
group $\mathrm{GL}(V)=\aut V$ of $V$
is the subgroup of all almost-radiations.

This result, much like the observation
we have made above, leads to the following
definition: given a natural number $m \ge 0,$
we define the subgroup $\LambdamA$ of the group $\aut A$ to be the
subgroup of all automorphisms $\f \in \aut A$
for which there is an integer number $k$
and a large direct summand $L$ of $A$ such that
$$
\f(a) \equiv ka \Mod{mA}
$$
for all $a \in L.$ In the case when $m=0,$
the group $\LLambda 0$ is equal the group $\aR(A)$
of all almost-radiations from $\aut A.$
Observe also that each almost-radiation belongs to all
subgroups $\LLambda s,$ where $s \in \N.$ In the case when $m=p$ is a prime
number, we obtain that the group $\LLambda p$
is a maximal normal subgroup of the group
$\aut A,$ since the homomorphism
$\aut A \to \aut{A/pA}$ induced by
the natural homomorphism $A \to A/pA$
is onto \cite{BrMa,BurnsFa}. A natural question to ask
is whether the converse is true,
that is, whether, analogously
to the case of the groups $\mathrm{SL}_n(\Z),$
the subgroups $\LLambda p$ are exactly
all maximal normal subgroups of the group $\aut A.$

In turn, the
principal congruence subgroup $\GGamma m$
of $\aut A$ of level $m$ is defined
in a straightforward fashion as the family
of all automorphisms $\f$ of $A$ such that
$$
\f(a) \equiv a \Mod{mA}
$$
for all $a \in A,$ or equivalently as
the kernel of the homomorphism
$\aut A \to \aut{A/mA}$ induced
by the natural homomorphism $A \to A/mA.$

The paper is organized as follows. In the
first section we describe normal generators
of the group $\aut A.$ We prove that
an automorphism $\f$ is a normal
generator of the group $\aut A$
if and only if there is a moietous direct summand
$W$ of $A$ such that
$$
A = W \oplus \f(W) \oplus V
$$
for a suitable direct summand $V$ (Theorem \ref{normal_generators_both}).
In particular, if $(x_i,y_i : i \in I)$ is a basis of the
group $A,$ then the automorphism $\tau \in \aut A$ defined
by
\begin{align*}
\tau x_i &= x_i +y_i, \qquad (i \in I), \\
\tau y_i &=y_i
\end{align*}
is a normal generator of the group $\aut A,$
naturally taking over the normal generator
$t_{12}(1)$ of the group $\mathrm{SL}_n(\Z).$
In the process of the proof of
Theorem \ref{normal_generators_both}, we demonstrate that if $\f \in \aut A$
is not a normal generator, then $\f \in \LLambda m$
for some natural number $m \ge 2.$

In the second section we work to classify the maximal
normal subgroups of the group $\aut A.$ Let $P$
be a subset of the set $\mathbf P$ of prime
numbers. Write $\Omega_P$ for the normal
subgroup $\bigcap_{p \in P} \LLambda p.$
Most generally, the maximal normal
subgroups of $\aut A$ are the normal
subgroups of the form
$$
\LLambda \cF =\bigcup_{P \in \cF} \Omega_P,
$$
where $\cF$ is an ultrafilter over $\mathbf P.$
More precisely (Theorem \ref{max_norm_subgrs}), in
the case when the cardinal $\rank(A)$
is of {\it uncountable} cofinality,
and quite similar to the situation
with the groups $\mathrm{SL}_n(\Z)$
we have outlined above,
the normal subgroups $\LLambda p,$
where $p$ ranges over $\mathbf P,$ are exactly the maximal
normal subgroups of the group $\aut A.$ As
a matter of fact, if $\rank(A)$ is of uncountable
cofinality and if $P$ is an infinite set of primes, then the group $\Omega_P$ is equal
to the group $\aR(A)$ of all almost-radiations (Proposition \ref{uncount_conf_no_inf_descent}),
which implies that $\LLambda \cF=\aR(A)$
for all nonprincipal ultrafilters $\cF$
over $\mathbf P.$ In the case when the cardinal
$\rank(A)$ has countable cofinality, the
normal subgroups $\LLambda \cF,$ where
$\cF$ runs over the family of all ultrafilters
over $\mathbf P,$ are pairwise distinct,
and are exactly the maximal normal
subgroups of the group $\aut A.$

We begin Section 3 by proving that for every
natural number $m \ge 0$ the normal closure of the $m$-th power
$\tau^m$ of the automorphism $\tau,$ defined above, in the group $\aut A$ is equal to
the principal congruence subgroup $\GGamma m$
(Proposition \ref{tau^m}).
We then prove a sufficient condition
for an automorphism $\f \in \aut A$
to contain the principal congruence
subgroup of some level $m \ge 2$ in
its normal closure (Proposition \ref{norm_gens_of_Km}).
Next, we show that if $N$ is a proper normal subgroup
of $\aut A$ having finitely many normal
generators and
if there exists a maximal natural number $m$
satisfying $N \le \LLambda m,$ then $\GGamma m \le N.$
This result is best possible in the case when
$\rank(A)$ is of countable cofinality.
On the other hand, if the cardinal $\rank(A)$
has uncountable cofinality,
then every normal subgroup
$N$ of $\aut A$ satisfies
Brenner's ladder relation,
that is, to repeat,
there exists a natural number $m \ge 0$
satisfying
$$
\GGamma m \le N \le \LLambda m
$$
(Proposition \ref{LadderRel}).

As we have seen above, a number of important
properties of the groups $\mathrm{GL}_n(\Z)$
and $\mathrm{SL}_n(\Z)$ admit rather
straightforward analogues for the
automorphism groups of infinitely
generated free abelian groups.
However, it is not the case for
the congruence subgroup property for
the groups $\mathrm{SL}_n(\Z),$ where $n \ge 3.$
Indeed, imposing any condition
on a subgroup $H$ of the group $\aut A$
in terms of the index of $H$
in $\aut A$ alone does not imply
that $H$ contains the
principal congruence
subgroup $\GGamma m$
for a suitable $m \ge 2.$
For instance, the stabilizer $H$ of a nonzero
element of $A$ in the group $\aut A,$ a subgroup
of index $\rank(A),$ does not contain nonidentity
normal subgroups of $\aut A,$ since evidently
$$
\bigcap_{\sigma \in \aut A} H^\sigma = \{\id_A\}.
$$
Speaking of the subgroups of $\aut A$ of strictly smaller index,
the group $\aut A$ does not have any under, say the GCH, since
for every cardinal $\nu$ satisfying
$2^\nu < 2^{\rank(A)},$ the group $\aut A$
has no subgroups of index $\nu$ \cite[Prop. 2.2]{To_Smallness4Nilps}.

The author would like to express his gratitude to George Bergman, Simon Thomas,
and Vitali\u{\i} Roman'kov for useful information, and to thank
the referee for helpful comments.

\section{Normal generators}

Everywhere below $A$ stands for an infinitely
generated free abelian group which we shall
fix till the end of the paper. We shall
use the letter $\vk$ to denote
the cardinal $\rank(A).$ The automorphism
group $\aut A$ of $A$ will be denoted by
$\Gamma.$ We split the family of all direct
summands of $A$ into three subfamilies:
the subfamilies of small, moietous,
and large direct summands. Let $R$
be any direct summand of $A.$
We call $R$
\begin{itemize}
\item {\it small} if rank of $R$ is strictly less than
the cardinal $\vk=\rank(A);$
\item {\it moietous} if both rank
and corank of $R$ are equal to $\vk$ \cite{Macph,To_Berg};
\item {\it large} if corank of $R$
is strictly less than $\vk.$
\end{itemize}
Observe that the sum of two small direct
summands is contained in a small direct
summand, that the intersection
of two large direct summands
is again a large direct summand,
and that the intersection of a large direct summand
and a moietous direct summand is a moietous
direct summand.

Recall that an element of $A$ which can
be included into a basis of $A$ is called
{\it unimodular}. It is also convenient to call
any nonempty subset of $A$ which can be
extended to a basis of $A$ a {\it unimodular set} of $A.$ Clearly,
a nonempty subset of $A$ is unimodular
iff it is a basis of a nonzero direct summand of $A.$

\begin{Prop} \label{Normalgenerators_suff_part}
Suppose an automorphism $\f \in \aut A$
has a moietous direct summand $W$ of $A$ such that
$$
A = W \oplus \f W \oplus V
$$
for some direct summand $V$ of $A.$ Then $\f$ is
a normal generator of the group $\aut A.$
\end{Prop}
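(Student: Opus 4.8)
The plan is to show that the normal closure $\langle\langle\varphi\rangle\rangle$ of $\varphi$ in $\Gamma$ contains the normal generator $\tau$ advertised in the introduction (the automorphism sending $x_i\mapsto x_i+y_i$, $y_i\mapsto y_i$ for a suitable basis $(x_i,y_i:i\in I)$ of $A$). Since $\tau$ is known to be a normal generator, this suffices. The starting data are the decomposition $A=W\oplus\varphi W\oplus V$ with $W$ moietous, together with $\varphi\in\aut A$. The key observation is that, because $W$ and $\varphi W$ are isomorphic free submodules forming a direct summand $W\oplus\varphi W$ of $A$, and because $W$ is moietous, one can arrange bases so that $\varphi$, restricted to $W$, realizes a ``coordinate shift'' analogous to the transvection $t_{12}(1)$.

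First I would fix a basis $(w_i:i\in I)$ of $W$ with $|I|=\varkappa$, so that $(\varphi w_i:i\in I)$ is a basis of $\varphi W$, and extend $\bigl((w_i,\varphi w_i):i\in I\bigr)$ by a basis $(v_j:j\in J)$ of $V$ to a basis of $A$. Next I would produce, inside $\langle\langle\varphi\rangle\rangle$, an automorphism of the shape ``$u\mapsto u+\psi(u)$ on a moietous summand, identity on a complement,'' by forming a commutator $[\varphi,\sigma]=\varphi\sigma\varphi^{-1}\sigma^{-1}$ with a cleverly chosen $\sigma\in\aut A$: choose $\sigma$ supported on $W\oplus\varphi W$ that permutes/identifies the $w_i$ with the $\varphi w_i$ in a controlled way, so that $\varphi^\sigma$ differs from $\varphi$ exactly in a way that, after multiplying by $\varphi^{-1}$, leaves a transvection-like map fixing a moietous summand. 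The moietousness of $W$ is what gives room to split $I$ into two halves of size $\varkappa$ and to move the ``active'' coordinates onto one half while keeping the other half as scratch space; this is the standard infinite-rank trick replacing the $n\ge 3$ hypothesis of Brenner.

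I would then conjugate and multiply these commutators to build an automorphism $\theta\in\langle\langle\varphi\rangle\rangle$ that acts, in a suitable basis $(x_i,y_i:i\in I')\cup(\text{rest})$, as $x_i\mapsto x_i+y_i$, $y_i\mapsto y_i$ on the $(x_i,y_i)$ block and as the identity on the rest — i.e.\ a transvection-like map on a moietous summand, which by the same reasoning that makes $\tau$ a normal generator is itself a normal generator; indeed one should be able to arrange $\theta$ to be conjugate to $\tau$ outright (both fix a moietous summand complementwise and act as $t_{12}(1)$-type maps on the other moietous summand, and an automorphism of $A$ carrying one basis to the other intertwines them). Hence $\langle\langle\varphi\rangle\rangle\supseteq\langle\langle\theta\rangle\rangle=\langle\langle\tau\rangle\rangle=\Gamma$, so $\varphi$ is a normal generator.

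The main obstacle, as usual in this circle of ideas, is the bookkeeping of the commutator calculation: one must choose $\sigma$ so that $[\varphi,\sigma]$ is genuinely nontrivial and has the desired ``fixes a large/moietous summand'' shape, without inadvertently landing in some $\Lambda_A(m)$ — which is precisely the failure mode flagged in the introduction for non-normal-generators. The moietousness of $W$ (both rank and corank $\varkappa$) is the crucial resource that prevents this collapse: it lets one feed in a fresh copy of a rank-$\varkappa$ summand on which $\varphi$'s action is ``new'' relative to its action on $W$, so the commutator cannot be a near-scalar. I would handle this by making the identification $\sigma:w_i\leftrightarrow\varphi w_i$ explicit on an index set of size $\varkappa$ and computing $[\varphi,\sigma]$ on basis elements directly, checking it has infinite ``support'' in the sense needed and then cleaning it up to the transvection normal form by a further change of basis inside $W\oplus\varphi W\oplus V$.
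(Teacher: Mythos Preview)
Your proposal is a strategy sketch rather than a proof, and the strategy as stated has a genuine gap. The hypothesis $A = W \oplus \varphi W \oplus V$ tells you only what $\varphi$ does to $W$; it says nothing about $\varphi$ on $\varphi W$ or on $V$, and hence nothing about $\varphi^2$ or $\varphi^{-1}$ restricted to $W$. When you form $[\varphi,\sigma]$ with $\sigma$ the swap $w_i \leftrightarrow \varphi w_i$, the computation immediately produces terms like $\varphi(\varphi w_i)$ and $\varphi^{-1}(w_i)$, which are completely uncontrolled. The line ``computing $[\varphi,\sigma]$ on basis elements directly'' is exactly where the argument needs content, and that content is not available from the hypothesis. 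Your claim that moietousness of $W$ ``prevents collapse'' into some $\Lambda_A(m)$ is unsubstantiated: without knowing $\varphi|_{\varphi W \oplus V}$ you cannot rule this out, and in fact cannot verify that your commutator has any particular shape at all.

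The paper's proof confronts precisely this difficulty, but by a different route. First, a logical point: in the paper's development, the fact that $\tau$ is a normal generator is a \emph{corollary} of this very Proposition, so invoking it as input is circular. The independent input the paper uses (from \cite{To_Berg}) is that any automorphism acting on some basis of $A$ as a permutation with support of cardinality $\varkappa$ is a normal generator; the proof then manufactures such an automorphism in $\mathrm{nc}(\varphi)$. The key device is to conjugate by an involution $\rho$ that sends $\varphi x_i$ to an \emph{arbitrary} second basis $(x_i')$ of $W$: this makes $\rho\varphi|_W$ a freely prescribable automorphism of $W$, and hence $(\rho\varphi)^2|_W$ can be taken to be any square --- in particular a permutation of a basis with the right support. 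A second involution then tames the unknown action on the complement $U=\varphi W\oplus V$, and a final conjugate--inverse cancellation $\psi \oplus \psi^{-1} \oplus \mathrm{id}$ kills the residual uncontrolled piece on $U$. Your commutator idea is in the same spirit, but it is this freedom-in-the-choice-of-involution that recovers control over the unknown global behaviour of $\varphi$; the naive swap $w_i\leftrightarrow\varphi w_i$ does not carry that freedom and cannot do the job on its own.
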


\begin{proof}
Recall that according to the well-known description of normal
subgroups of the symmetric group $\text{Sym}(I)$ of an infinite
set $I$ \cite[Sect. 8.1]{DiMo}, the proper normal subgroups
of $\mathrm{Sym}(I)$ are exactly the normal subgroups
$$
\{\id\}, \quad \mathrm{Alt}(I),\quad \text{Sym}_\lambda(I),
$$
where $\mathrm{Alt}(I)$ is the alternating group
of $I,$ $\lambda$ is an arbitrary infinite cardinal $\le |I|$
and $\mathrm{Sym}_\lambda(I)$ consists of all permutations
of $I$ whose support is of cardinality $< \lambda.$
Consequently, any permutation of $I$ whose support is of cardinality
$|I|$ is a normal generator of the group $\text{Sym}(I).$

Next, we are going to describe
certain automorphisms of $A$ that are known to be normal generators
of $\Gamma=\aut A.$ According to Proposition 2.1
and the proof of part (i) of Theorem 1.5 from \cite{To_Berg},
any involution of $A$ which acts on a suitable basis
of $A$ as a permutation of order two whose support
and the fixed-point set are both of cardinality $\rank(A)$ is a normal
generator of $\Gamma.$
It follows that any automorphism
of $A$ which acts on some basis of $A$
as a permutation whose support is of
cardinality $\rank(A)$ is a normal generator
of $\Gamma.$

Let $(x_i : i \in I)$ and $(x_i' : i \in I)$ be bases of the direct
summand $W$ and let $(v_j : j \in J)$ be a basis
of the direct summand $V.$ As, by the conditions,
$$
(x_i : i \in I) \sqcup (\f x_i : i \in I) \sqcup (v_j : j \in J),
$$
where $\sqcup$ denotes the disjoint union
of sets, is basis of $A,$ there is an involution $\rho \in \Gamma$
which interchanges $\f x_i$ and $x_i'$
for all $i \in I$ and takes each element $v_j$
to itself for all $j \in J.$ Accordingly,
$\rho \f W = W,$ and hence
$$
\rho \f \rho \f W =W.
$$
Furthermore, the automorphism $\f_1 = \rho \f \rho \f$
is in the normal closure of $\f$ and, due to arbitrariness
of $(x_i' : i \in I),$ the restriction
of $\f_1$ on $W$ could be equal to
the square of any automorphism of $W.$
We can therefore assume that $\f_1$
acts on the basis $(x_i : i \in I)$
as a permutation with $\rank(A)$
infinite cycles and the fixed-point
set of cardinality $\rank(A).$

Now let $U=\f(W) \oplus V$ and let $(u_j + y_j : j \in J)$
be a basis of the subgroup $\f_1 U,$
where $u_j \in U$ and $y_j \in W$ for all $j \in J.$
Clearly, $(u_j : j \in J)$ is a basis
of the group $U.$ Consider the involution $\rho_1 \in \Gamma$
which sends every element of $W$ to the
opposite one, and for which we have that
$$
\rho_1(u_j)=u_j+y_j
$$
for all $j \in J.$ It follows $\rho_1 \f_1 U =U,$
and hence the automorphism $\f_2 = \rho_1 \f_1 \rho_1 \f_1$
preserves both $W$ and $U,$ and the action
of $\f_2$ on the basis $\cX=(x_i : i \in I)$ is isomorphic
to that one of $\f_1.$ Let $W_1$
be the span of the support of the permutation
$\f_2|_{\cX}$ of the basis $\cX$ and $W_2$ the span of the
fixed part of this permutation; both
$W_1$ and $W_2$ are moietous direct
summands by the construction. The
automorphism $\f_2$ can then be described
as the direct sum
$$
\psi \oplus \id \oplus \s
$$
taken over the decomposition
$$
A=W_1 \oplus W_2 \oplus U,
$$
where $\psi$ (resp. $\id,\s$) refers to the (isomorphism type of the) restriction
of $\f_2$ on $W_1$ (resp. on $W_2,U$). Accordingly,
the automorphism
$$
\id \oplus \psi\inv \oplus \s\inv
$$
taken over the same decomposition
is a conjugate of $\f_2\inv,$ and their
product is
$$
\f_3 = \psi \oplus \psi\inv \oplus \id.
$$
Now the automorphism $\f_3,$ an element
of the normal closure of $\f,$ acts on a suitable basis of $A$
as a permutation with the support
of cardinality $\rank(A),$ and
hence, as we remarked above,
is a normal generator of the
group $\Gamma.$
\end{proof}

Next, we describe a normal generator of the group $\Gamma=\aut A$
that naturally takes over the standard normal generator
$$
t_{12}(1) = {\mathrm I} + {\mathrm I}_{12}
$$
of the group $\mathrm{SL}_n(\Z),$ where $n \ge 2,$ ${\mathrm I}$ is the identity matrix of size
$n \times n$ and ${\mathrm I}_{12}$ is the matrix unit of size $n \times n$ (all of whose entries,
except for the $(1,2)$-entry which is equal to $1,$ are zeros).

\begin{Cor}
Let $(x_i,y_i : i \in I)$ be a basis of the group
$A.$ Then the normal closure of the automorphism $\tau \in \aut A$ defined
by
\begin{align*}
\tau x_i &= x_i +y_i, \qquad (i \in I), \\
\tau y_i &=y_i
\end{align*}
is equal to the group $\aut A.$
\end{Cor}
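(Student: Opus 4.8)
The plan is to reduce the statement directly to Proposition~\ref{Normalgenerators_suff_part} by producing a single moietous direct summand $W$ of $A$ with the property that $A = W \oplus \tau W$; then the hypotheses of that proposition hold with the trivial complement $V = \{0\}$, and we are done.

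Concretely, I would take $W$ to be the span of the subfamily $(x_i : i \in I)$ of the given basis. Since $I$ is infinite, $\vk = \rank(A) = |I|$, and both the rank of $W$ and the rank of $A/W$ (which is free on the images of the $y_i$, hence of rank $|I|$) are equal to $\vk$; thus $W$ is moietous. By the definition of $\tau$ we have $\tau x_i = x_i + y_i$, so $\tau W$ is the span of $(x_i + y_i : i \in I)$.

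The only thing left to verify is that $A = W \oplus \tau W$, i.e. that $(x_i,\, x_i + y_i : i \in I)$ is again a basis of $A$; but this family is the image of the basis $(x_i, y_i : i \in I)$ under the $\Z$-linear automorphism fixing each $x_i$ and sending $y_i \mapsto x_i + y_i$, which is unitriangular and hence invertible over $\Z$. Therefore Proposition~\ref{Normalgenerators_suff_part} applies and the normal closure of $\tau$ in $\aut A$ is the whole group. I do not anticipate any real obstacle here: the only point requiring a moment's care is the cardinal bookkeeping confirming that $W$ is moietous, and this is immediate from $I$ being infinite. (If one prefers to invoke the proposition with a nonzero complement, one may instead let $W$ be spanned by $(x_i : i \in I_0)$ for a moiety $I_0 \subseteq I$ and take $V$ to be spanned by the remaining $x_i$ together with all the $y_i$; the verification is the same.)
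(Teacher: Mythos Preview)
Your proposal is correct and follows essentially the same approach as the paper: both take $W=\langle x_i : i\in I\rangle$, observe that $A=W\oplus\tau W$, and invoke Proposition~\ref{Normalgenerators_suff_part} (with $V=\{0\}$). The paper's proof is simply the one-line version of yours, omitting the explicit verifications that $W$ is moietous and that $(x_i,\,x_i+y_i:i\in I)$ is a basis.
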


\begin{proof}
We have that
$$
A = \str{x_i : i \in I} \oplus \tau\str{x_i : i \in I},
$$
whence the result.
\end{proof}

We will denote the isomorphism type of the
action of $\tau$ on $A$ by $\btau.$ We shall see later that, quite similar
to the situation with the groups $\mathrm{SL}_n(\Z),$
given a natural number $m \ge 0,$ the normal closure of
any automorphism of type $\btau^m$ is equal
to the principal congruence subgroup $\GammamA$
of $\Gamma.$

Let $m \ge 0$ be a natural number. Then the {\it principal
congruence subgroup} $\GammamA$ of $\Gamma=\aut A$ of level $m$ consists
of all automorphisms $\f$ of $A$ such that
$$
\f a \equiv a \Mod{mA},
$$
or, simply put,
$$
\f a \equiv a \Mod m
$$
for all $a \in A.$ Equivalently, $\GammamA$
is the kernel of the homomorphism $\aut A \to \aut{A/mA}$
induced by the natural homomorphism $A \to A/mA.$
We define the subgroup $\LambdamA$ of $\Gamma=\aut A$
to be the subgroup of all automorphisms $\f \in \Gamma$
such that there are a large direct summand $M$ of
$A$ and an integer number $k$ satisfying
$$
\f x \equiv k x \Mod m
$$
for all $x \in M.$ Geometrically, an element $\f \in \LambdamA$
induces an automorphism of the group $A/mA$
which is `almost a radiation', for it acts
as a radiation on a large direct summand
of this group.

\begin{Prop} \label{Nongenerators}
Let $\f \in \aut A$ be such that
there is no moietous direct summand $W$ of $A$
having the property that $W \oplus \f W$
is a direct summand of $A.$
Then $\f \in \LambdamA$ for a suitable
natural number $m \ge 2.$ In particular,
$\f$ is not a normal generator
of the group $\aut A.$
\end{Prop}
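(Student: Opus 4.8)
Assume the hypothesis; I will exhibit a prime $p$ with $\f\in\LLambda{p}$, which gives the conclusion with $m=p\ge 2$ (and then $\f$ lies in a proper normal subgroup of $\Gamma$, so it is not a normal generator). The only non‑routine preliminary is this: if the automorphism $\bar\f$ induced by $\f$ on $V_p:=A/pA$ acts as a scalar $k$ on a large subspace $L\le V_p$, then $\f\in\LLambda{p}$. Indeed, extend a basis of $L$ to a basis of $V_p$ and, using that $\Gamma\to\aut{A/pA}$ is onto \cite{BrMa,BurnsFa}, realise it as the reduction of a basis of $A$; the span of the lifts of the $L$‑part is a large direct summand of $A$ on which $\f\equiv k\Mod p$.

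Next I try to build unimodular elements $w_\alpha\in A$ $(\alpha<\vk)$ by transfinite recursion so that at every stage $\{w_\beta:\beta<\alpha\}\sqcup\{\f w_\beta:\beta<\alpha\}$ is unimodular; at a successor step, having the small direct summand $T_\alpha$ spanned by what is built so far, one looks for a unimodular $w$ with $T_\alpha\oplus\Z w\oplus\Z\f w$ a direct summand of $A$. Carried out with the customary bookkeeping — one simultaneously assembles a complement, so that the construction yields a basis of $A$ of the form $\{w_\alpha:\alpha<\vk\}\sqcup\{\f w_\alpha:\alpha<\vk\}\sqcup\{v_\alpha:\alpha<\vk\}$, which also renders limit stages harmless — a successful recursion exhibits $W:=\str{w_\alpha:\alpha<\vk}$ as a moietous direct summand with $W\oplus\f W$ a direct summand, which the hypothesis forbids. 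Hence the recursion halts: there is a \emph{small} direct summand $S$ of $A$ such that no unimodular $w$ makes $S\oplus\Z w\oplus\Z\f w$ a direct summand of $A$.

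Fix $A=S\oplus A'$; so $A'$ is large, of rank $\vk$. If $a\in A'$ is unimodular, then its image in $A/S\cong A'$ is unimodular, so $S\oplus\Z a$ is a direct summand, and the halting condition forces the image of $\f a$ in $A/(S\oplus\Z a)$ to be non‑unimodular, i.e.\ $\f a\equiv ca\Mod{S+pA}$ for some prime $p$ and integer $c$. Writing $\pi\colon A\to A'$ for the projection along $S$ and $\psi:=\pi\circ\f|_{A'}$, this reads $\psi a\equiv ca\Mod{pA'}$; thus $a$ lies in one of the subgroups $E_p^{\,c}:=\{x\in A':\psi x\equiv cx\Mod{pA'}\}\sge pA'$, and (passing to multiples) $A'$ is the union of this \emph{countable} family of subgroups. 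Now suppose some $E_p^{\,c}$ is large in $A'$. Then $E_p^{\,c}/pA'=\ker(\bar\psi-c)$ is a large subspace of $A'/pA'$, which lifts as in the first paragraph to a large direct summand $M\le A'$ with $\f x\equiv cx\Mod{S+pA}$ for all $x\in M$; reducing modulo $p$ this puts $(\bar\f-c)(\bar M)$ inside the small subspace $(S+pA)/pA$ of $V_p$, so $\ker(\bar\f-c)$ contains a large subspace, i.e.\ $\bar\f$ is the scalar $c$ on a large subspace of $V_p$, whence $\f\in\LLambda{p}$ and we are done with $m=p$.

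The main obstacle is the conditional step just used: that one of the $E_p^{\,c}$ is large in $A'$ — equivalently, that $A'$ is \emph{not} covered by countably many subgroups of corank $\vk$ of the special shape $E_p^{\,c}$. Cardinality yields nothing here, since a free abelian group of rank $\vk$ genuinely can be a countable increasing union of subgroups of corank $\vk$, and when $\operatorname{cf}(\vk)=\aleph_0$ there is not even a pigeonhole over the (countably many) primes available. Instead one must use the halting condition in full strength — it bears on \emph{every} unimodular $w$ whose image in $A/S$ is unimodular, not merely on those from $A'$ — together with the additivity $\f(w+w')=\f w+\f w'$, to transfer the prime $p$ and scalar $c$ attached to one unimodular element onto a moietous family of companions, thereby pinning a single pair $(p,c)$ down on a large part of $A'$ and contradicting $E_p^{\,c}$ having corank $\vk$. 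Making this transfer work uniformly, irrespective of the cofinality of $\vk$, is where the substance of the proof lies.
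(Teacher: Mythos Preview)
Your proposal has a genuine gap, which you yourself flag in the final paragraph: you reduce the problem to showing that one of the countably many subgroups $E_p^{\,c}$ is large in $A'$, and then say only that ``making this transfer work uniformly, irrespective of the cofinality of $\vk$, is where the substance of the proof lies,'' without actually carrying out that transfer. The additivity heuristic you sketch (propagating the pair $(p,c)$ from one unimodular element to its companions via $\f(w+w')=\f w+\f w'$) is not obviously workable: for a unimodular pair $\{a,b\}$ with $\psi a\equiv c_1 a\Mod{p_1}$ and $\psi b\equiv c_2 b\Mod{p_2}$, applying the halting condition to $a+b$ yields a third pair $(p_3,c_3)$, but there is no evident mechanism forcing $p_3\in\{p_1,p_2\}$, so a direct propagation argument stalls. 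As written, then, the proof is an outline with its central step missing.

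The paper avoids this obstacle altogether by not reducing modulo primes. After obtaining the small blocking summand $X$ (your $S$), it passes to a large direct summand $T_1$ contained in a complement of $X$ and invariant enough that for every unimodular $t\in T_1$ one can write $\f(t)=k(t)\,t+m(t)\,z(t)$ with $m(t)\in\N\setminus\{1\}$ or $m(t)=0$ and $\{t,z(t)\}$ unimodular. If $m(t)\equiv 0$ on all of $T_1$, then $\f$ is an almost-radiation and one is done. Otherwise let $m_0=m(t_0)$ be the \emph{minimal positive} value of $m(\cdot)$. Then for any unimodular $t$ in a further large summand one writes $m(t)=qm_0+r$ and computes $\f(t-qt_0)$: its ``content'' is $\le r$, so minimality forces $r=0$, i.e.\ $m_0\mid m(t)$. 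A second, analogous computation with $\f(t-t_0)$ gives $k(t)\equiv k(t_0)\Mod{m_0}$. Together these yield $\f(t)\equiv k_0 t\Mod{m_0}$ on a large direct summand, so $\f\in\LLambda{m_0}$ directly. This division-with-remainder trick is the missing idea in your approach; it is cofinality-free and bypasses any covering or pigeonhole argument.
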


\begin{proof} Given a direct summand $B$
of $A$ and a unimodular element $u \in A,$
we will write
$$
B \to u
$$
iff $B \oplus \str u$ is a direct summand
of $A.$ In other words, $\cB \cup \{u\}$
is a unimodular set of $A$ for every basis $\cB$
of $B.$ Consequently, we have that
$$
B \not\to u
$$
iff there are an element $b \in B$ and a natural number $m \ge 2$
such that
$$
u \in b + mA;
$$
equivalently, the image of $u$ is in the image of $B$
under the natural homomorphism $A \to A/mA.$

\begin{claim-num}
There is a small direct summand $X$ of $A$ {\rm(}of rank $< \vk${\rm)} such that
$$
X \to t \To \str{X,t} \not\to \f(t)
$$
for every unimodular element $t \in A.$
\end{claim-num}

Indeed, suppose otherwise. Then for
every small direct summand $Y$ of $A$
there is a unimodular element $t \in A$
such that
$$
Y \to t \text{ and } \str{Y,t} \to \f(t).
$$
Let then
$
\cB=(b_i : i < \vk)
$
be a basis of $A$ indexed by the elements of the cardinal $\vk$
(here and below we always assume that any ordinal
which is strictly less than $\vk$ is of strictly
smaller cardinality). Using the transfinite induction,
we are going to construct a chain $(\cC_i : i < \vk)$
of unimodular sets of $A$ of cardinality $< \vk$ such that
\begin{itemize}
\item the direct summand of $A$ generated
by any member $\cC_i$ of the chain is equal
to the direct summand generated by a suitable
subset $\cB_i$ of the basis $\cB;$
\item the union of the chain $(\cC_i : i < \vk)$
is a basis of $A$
\end{itemize}
and a moiety $\cT$ of $\cC$ such that
\begin{equation} \label{str(T),f(str(T))}
\str\cT \oplus \f(\str\cT)
\text{ is a direct summand of $A$}
\end{equation}
Take $\cC_0$ (resp. $\cT_0$) to be the empty set. Suppose that
$i = j+1$ is an element of $\vk.$ Consider
the direct summand $\str{\cC_j}$ and let
$t_i \in A$ be a unimodular element of $A$
satisfying
\begin{equation}
\str{\cC_j} \to t_i \text{ and } \str{\cC_j,t_i} \to \f(t_i).
\end{equation}
By the construction,
$$
\str{\cC_j}=\str{\cB_j}
$$
for some subset $\cB_j$ of $\cB.$
Now we add to $\cB_j$ the minimal
finite subset $\cB_j'$ of $\cB$
such that
$$
b_i,t_i,\f(t_i) \in \str{\cB_j \cup \cB_j'}.
$$
By (\theequation), there is a basis
of the subgroup
$$
\str{\cB_j \cup \cB_j'}
$$
which contains $\cC_j$ and both $t_i$
and $\f(t_i),$ and we take this basis
to be our next unimodular set $\cC_i.$ Further,
$$
\cT_i=\cT_j \cup \{t_i\}.
$$
When $i < \vk$ is a limit ordinal, we set
$$
\cC_i =\bigcup_{j < i} \cC_j=\bigcup_{j < i} (\cC_j \setminus \bigcup_{k < j} \cC_k) \text{ and } \cT_i =\bigcup_{j < i} \cT_j;
$$
the second equality demonstrates that $\cC_i$ is a union
of $|i|$ of finite sets, and hence is of cardinality
at most $|i| \cdot \aleph_0= |i| < \vk.$ Setting then
$$
\cC=\bigcup_{i < \vk} \cC_i,
$$
we obtain that
$$
\cT=\bigcup_{i < \vk} \cT_i
$$
satisfies \eqref{str(T),f(str(T))}, which contradicts
the hypothesis of the Proposition.

\begin{claim-num}
There is a large direct summand $T \le A$ {\rm(}of corank $< \vk${\rm)}
and coprime natural numbers $k,m,$ where $m \ge 2$
such that
$$
\f(t) \equiv kt \Mod m
$$
for all $t \in T.$
\end{claim-num}

According to Claim 1, there exists a small direct summand $X$
such that
$$
X \to t \To \str{X,t} \not\to \f(t)
$$
for all unimodular elements $t \in A.$ In effect,
assuming that $X \to t$ for a unimodular
element $t \in A,$ we obtain that $\str{X,t} \not\to \f(t),$
and hence
$$
\f(t) = x + kt + mz
$$
where $x=x(t) \in X,$ $k=k(t)\in \Z,$ $m=m(t) \in \N \setminus \{1\},$
and $z=z(t) \in A$ is an unimodular
element for which have that
$$
\str{X,t} \to z.
$$

Consider
a direct complement $T_0$ of $X$ to $A,$ and let
$$
T_1 = T_0 \cap \f\inv( T_0).
$$
The direct summand $T_1,$ being an intersection
of two large direct summands is itself
large. By the construction, $\f(T_1) \le T_0,$
and hence for every unimodular element $t \in T_1,$
$$
\f(t) = kt + mz
$$
where $k=k(t) \in \Z$ and $m=m(t) \in \N \setminus \{1\},$
and $z=z(t)$ is a unimodular element of $T_0$ such that
$\str{X,t} \to z;$ observe also that the number $m(t)$
is uniquely determined (whereas $k(t)$ and $z(t)$ are not).

Now if $m(t)=0$ for all unimodular elements $t$ in $T_1,$ we
get that $\f(t)=k(t) t,$ where $k(t) \in \Z,$ or, obviously,
$\f(t) = \pm t$ for {\it all} unimodular elements $t$ of $T_1.$ This easily implies
that $\f$ either fixes each of element $T_1,$ or takes
each element of $T_1$ to the opposite one, whence $\f \in \LambdamA$
for all $m \ge 2,$ and we are done.

Suppose that the function $m(t)$ does not vanish on the family $\cU(T_1)$
of all unimodular elements of $T_1$ and let $t_0 \in T_1$
be a unimodular element such that
$$
m_0=m(t_0)
$$
is the minimal positive value of the function $m(t)$
on $\cU(T_1).$ Let further
$$
\f(t_0)=k_0 t_0 + m_0 z_0
$$
where $\{t_0,z_0\}$ is a unimodular pair in $T_0.$
Write $T_0$ as
$$
T_0 = \str{t_0,z_0} \oplus T_0'
$$
and set
$$
T_2= T_1 \cap T_0'.
$$
and
$$
T_3 = T_2 \cap \f\inv(T_2).
$$
Again, both $T_2$ and $T_3$ are large direct summands, and
$$
\f(T_3) \le T_2.
$$

Suppose that $t \in T_3$ is a unimodular element and
let
$$
\f(t)=k t+m z
$$
where $\{t,z\}$ is a unimodular pair in $T_2.$ We claim
that $m_0$ divides $m.$ Importantly,
$$
\{t_0,z_0,t,z\}
$$
is a unimodular set of $T_0.$ Divide $m$ by $m_0$ with remainder,
$$
m = q m_0 + r
$$
where $0 \le r < m_0$ and consider the action
of $\f$ on the element $t - q t_0 \in T_1$:
\begin{align*}
\f(t-q t_0) &= k t + (m_0 q+r) z - (k_0q t_0 + m_0qz_0) \\
	    &= k (t-q t_0) + (k-k_0)q t_0 +m_0 q( z-z_0) + r z.
\end{align*}
Since
$$
\{t-qt_0, t_0, z-z_0, z\}
$$
is a unimodular set of $T_0,$ we deduce that $r=0,$
for otherwise $m(t-qt_0) \le r$ would be less than $m(t_0).$

Next, we prove that $k-k_0$ is divisible
by $m_0.$ Indeed, let
$$
k-k_0= q' m_0+r'
$$
where $0 \le r' < m_0$ and consider the
action of $\f$ on $t-t_0 \in T_1$:
\begin{align*}
\f(t-t_0) &= k t+mz - k_0 t_0 -m_0 z_0\\
	  &= k(t-t_0) + (k-k_0)t_0 -m_0 z_0 +m z\\
	  &= k(t-t_0) + (q' m_0 +r')t_0 -m_0 z_0 + mz\\
	  &= k(t-t_0) + r' t_0 +m_0 (q't_0-z_0) +mz.
\end{align*}
Again, since
$$
\{t-t_0, t_0, q't_0 -z_0,z\}
$$
is a unimodular set of $T_0,$ we would get
a contradiction to the choice of $m(t_0)$
if we had $r' \ne 0.$
\end{proof}

\begin{Th} \label{normal_generators_both}
An automorphism $\f$ of the group $A$ is
a normal generator of the group $\aut A$
if and only if there is a moietous direct
summand $W$ of $A$ such that $W \oplus \f(W)$
is a direct summand of $A.$
\end{Th}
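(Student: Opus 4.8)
The statement is an equivalence, and both directions are essentially already in hand from the two propositions preceding it, so the theorem is really just a matter of assembling them. The plan is to prove the two implications separately, citing the appropriate earlier result in each case and dealing with the small logical gap that remains.

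For the sufficiency direction (``$\Leftarrow$''), suppose there is a moietous direct summand $W$ of $A$ such that $W \oplus \f(W)$ is a direct summand of $A$. Write $A = W \oplus \f(W) \oplus V$ for a suitable direct summand $V$. Then the hypotheses of Proposition \ref{Normalgenerators_suff_part} are met verbatim, and that proposition gives that $\f$ is a normal generator of $\Gamma = \aut A$. So this half is immediate.

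For the necessity direction (``$\Rightarrow$''), I would argue by contraposition. Suppose that no moietous direct summand $W$ of $A$ has the property that $W \oplus \f(W)$ is a direct summand of $A$. Then Proposition \ref{Nongenerators} applies and yields a natural number $m \ge 2$ with $\f \in \LambdamA$. It remains to observe that $\LambdamA$ is a proper normal subgroup of $\Gamma$ for every $m \ge 2$, so that $\f$ cannot be a normal generator; equivalently, the normal closure of $\f$ is contained in $\LambdamA \subsetneq \Gamma$. The only genuine content here is the properness of $\LambdamA$: it is normal because it is visibly invariant under conjugation (conjugating carries a large direct summand to a large direct summand and preserves congruences modulo $m$), and it is proper because, for instance, the automorphism $\tau$ of the Corollary does not lie in any $\LambdamA$ with $m \ge 2$ --- indeed $\tau$ is a normal generator by that Corollary, so if it lay in the proper normal subgroup $\LambdamA$ we would have $\Gamma = \LambdamA$, a contradiction, and conversely one checks directly that $\tau$ fails the defining congruence on every large summand. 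Either way, $\f$ is not a normal generator, which is what we wanted.

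The main obstacle, such as it is, is purely bookkeeping: making sure that the logical shape of Proposition \ref{Nongenerators} matches what is needed here (it is stated as ``no moietous $W$ with $W \oplus \f W$ a summand $\Rightarrow$ $\f \in \LambdamA$ for some $m \ge 2$'', which is exactly the contrapositive of the nontrivial half of the theorem), and verifying that $\LambdamA \ne \Gamma$. There is no hard analytic or combinatorial step left --- all of that was already spent in the transfinite construction inside the proof of Proposition \ref{Nongenerators} and in the Sym$(I)$-reduction inside the proof of Proposition \ref{Normalgenerators_suff_part}. So the write-up will be short.
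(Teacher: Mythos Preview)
Your proposal is correct and follows exactly the paper's own route: the theorem is assembled from Proposition~\ref{Normalgenerators_suff_part} (for ``$\Leftarrow$'') and Proposition~\ref{Nongenerators} (for the contrapositive of ``$\Rightarrow$''), and the paper's proof consists of nothing more than citing those two results. Note that the properness of $\LambdamA$ is already packaged into the ``In particular'' clause of Proposition~\ref{Nongenerators}, so your extra justification is not needed --- and be careful that your first argument for properness (``$\tau$ is a normal generator, so if it lay in the \emph{proper} normal subgroup $\LambdamA$\ldots'') is circular as written; only the direct congruence check you offer afterward actually does the job.
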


\begin{proof}
By Proposition \ref{Normalgenerators_suff_part}
and Proposition \ref{Nongenerators}.
\end{proof}

\begin{Rem} \label{BrTrIndArg}
\rm In what follows, much as in the proof of Claim 1
in the proof of Proposition \ref{Nongenerators}, we will need on a couple of occasions to construct
a moiety $\cT$ of a basis $\cC$ of $A$ with a number of prescribed properties.
Each time we will use a modified version of the proof of the aforementioned claim:
namely, given any small direct summand $X$ of $A$ with a basis
$\cX,$ we will first demonstrate that it is possible to find a unimodular element $t$
of $A,$ a new potential member $\cT,$ possibly along with a finite number $z_1,\ldots,z_k$ of unimodular
elements of $A$ such that $\cX \cup \{t\} \cup \{z_1,\ldots,z_k\}$ is a unimodular set of $A,$
to ensure that $t$ behaves as required. Then existence of $\cC$ and $\cT$ will become a simple
matter of application of the transfinite induction
in the spirit of the transfinite induction
argument from the proof of the Claim. $\triangle$
\end{Rem}

\section{Maximal normal subgroups}

We will call an automorphism $\f \in \aut A$
{\it almost a radiation} if there is a large direct
summand $M$ of $A$ such that $\f$ acts on $M$ either as $\id_M,$ or
as $-\id_M$ (we have already met these
automorphisms in the proof of Proposition \ref{Nongenerators}). It is easy to see that
the family $\aR(A)$ of all almost-radiations
of $A$ is a normal subgroup of $A.$

\begin{Prop} \label{Almost-Radiations}
Let $\f$ be an automorphism of the group $A.$ Then the following
are equivalent:

{\rm (i)} $\f$ is an almost-radiation;

{\rm (ii)} for every moietous direct summand $U$ of $A,$ the subgroup $\f(U) \cap U$ is a
large direct summand of both the group $U$
and the group $\f(U);$

{\rm (iii)} for every moietous direct summand $W$ of $A,$
$\f(W) \cap W \ne \{0\}.$
\end{Prop}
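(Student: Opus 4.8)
The plan is to prove the cyclic chain of implications (i) $\To$ (ii) $\To$ (iii) $\To$ (i), since (i) $\To$ (ii) and (ii) $\To$ (iii) should be comparatively routine, while (iii) $\To$ (i) is where the real work lies.

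\emph{Proof of (i) $\To$ (ii).} Suppose $\f$ acts on a large direct summand $M$ as $\pm\id_M$, and let $U$ be any moietous direct summand. Then $M \cap U$ is an intersection of a large direct summand with a moietous one, hence is a moietous direct summand of $A$, and in particular is a large direct summand of $U$; moreover $\f$ fixes $M \cap U$ up to sign, so $M \cap U \le \f(U) \cap U$. It remains to see that $\f(U) \cap U$ is actually a direct summand of $U$ and of $\f(U)$ and that the containment $M \cap U \sle \f(U) \cap U$ does not destroy largeness — but since $\f(U) \cap U$ contains the large-in-$U$ summand $M \cap U$, it is itself a large subgroup of $U$; one checks it splits off as a direct summand by a standard argument using that $A/mA$-type considerations are not needed here, only that submodules of free abelian groups containing a direct summand of finite-rank complement are again direct summands. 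Symmetrically for $\f(U)$, using $M \cap \f(U) \supseteq \f(M \cap U)$.

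\emph{Proof of (ii) $\To$ (iii).} Immediate: if $W$ is moietous then by (ii) $\f(W) \cap W$ is a large direct summand of $W$, in particular nonzero (a large direct summand of an infinite-rank module cannot be $\{0\}$).

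\emph{Proof of (iii) $\To$ (i).} This is the main obstacle, and I would attack it by contraposition, using the transfinite-induction machinery of Proposition \ref{Nongenerators} and Remark \ref{BrTrIndArg}. Suppose $\f$ is \emph{not} an almost-radiation. Then for \emph{every} large direct summand $M$, $\f$ acts on $M$ neither as $\id_M$ nor as $-\id_M$; equivalently, for every large $M$ there is a unimodular $t \in M$ with $\f(t) \ne \pm t$. The goal is to build a moiety $\cT$ of a basis $\cC$ of $A$ such that $\str\cT \cap \f(\str\cT) = \{0\}$. Following Remark \ref{BrTrIndArg}, the inductive step is: given a small direct summand $X$ with basis $\cX$, find a unimodular $t$ and finitely many unimodular elements $z_1, \ldots, z_k$ so that $\cX \cup \{t\} \cup \{z_1,\ldots,z_k\}$ is unimodular and, writing $\f(t)$ in terms of the extended basis, the $\str t$-component interaction is controlled. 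The delicate point is ruling out that accumulated choices force $\f(t)$ back into $\str{\cC_j, t}$: here is where the hypothesis that $\f$ is not even ``almost $\pm\id$'' on any large summand must be used, to guarantee at each stage a unimodular $t$ in a suitable large direct summand (a complement of the current small $\str{\cC_j}$) whose image $\f(t)$ does not land in $\str{\cC_j, t}$ plus torsion-type obstructions — in other words, one must also exclude the $\LLambda m$ behaviour, and this is precisely where I expect to re-run (a piece of) the argument of Proposition \ref{Nongenerators}: if at some stage no good $t$ exists, then $\f$ satisfies $\f(t) \equiv kt \Mod m$ on a large direct summand, and combined with (iii) applied to moieties inside that summand one forces $k \equiv \pm 1$, $m \mid (k \mp 1)$, hence $\f$ acts as $\pm\id$ modulo nothing on a large summand after correcting by a finite-rank adjustment — contradicting that $\f$ is not an almost-radiation. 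Once the inductive step is established, the transfinite induction over $\vk = \rank(A)$ assembling $\cC = \bigcup_{i<\vk}\cC_i$ and $\cT = \bigcup_{i<\vk}\cT_i$ proceeds exactly as in Proposition \ref{Nongenerators}, giving a moietous $W = \str\cT$ with $\f(W) \cap W = \{0\}$, which contradicts (iii). Hence $\f$ is an almost-radiation.

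The step I expect to fight with is the inductive construction in (iii) $\To$ (i): making sure that the finitely many auxiliary elements $z_1, \ldots, z_k$ needed to express $\f(t)$ can always be chosen inside a basis-compatible small summand, and simultaneously that the ``bad'' alternative (no suitable $t$) genuinely collapses to the almost-radiation case rather than to some intermediate $\LLambda m$ phenomenon. Handling this cleanly will likely require isolating, as in Claim 2 of Proposition \ref{Nongenerators}, the minimal modulus $m_0$ of the obstruction and showing that hypothesis (iii) forces $m_0$ to be trivial.
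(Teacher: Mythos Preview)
Your overall architecture matches the paper exactly: the cycle (i) $\To$ (ii) $\To$ (iii) $\To$ (i), with the substance in the last implication via a transfinite construction in the spirit of Remark \ref{BrTrIndArg}. The problem is that in (iii) $\To$ (i) you import too much of Proposition \ref{Nongenerators} and thereby set up the wrong dichotomy.

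The inductive requirement you actually need at each step is merely $\f(t) \notin \str{X,t}$: writing $\f(t) = x + kt + mz$ with $x \in X$, $\str{X,t} \to z$, this just says $m \ne 0$. That alone suffices to ensure the resulting moiety $W = \str\cT$ satisfies $\f(W) \cap W = \{0\}$, because each $\f(t_i)$ carries a nonzero coefficient on the fresh basis element $z_i \in \cC \setminus \cT$. This is strictly \emph{weaker} than the condition $\str{X,t} \to \f(t)$ from Proposition \ref{Nongenerators}. Correspondingly, the failure case is strictly \emph{stronger}: if some small $X$ satisfies $X \to t \To \f(t) \in \str{X,t}$ for all unimodular $t$, then on a large $M$ lying in a complement of $X$ (intersected with the $\f$-preimage of that complement) one has $\f(t) = k(t)t$ for every unimodular $t \in M$, which forces $\f|_M = \pm\id_M$ at once. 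No $\LLambda m$ behaviour appears, no minimal-modulus extraction is needed, and hypothesis (iii) is never invoked in this branch.

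Your plan---treat the failure case as landing in some $\LLambda m$ and then use (iii) to force $k \equiv \pm 1$ and $m \mid (k\mp 1)$---is both unnecessary and unclear as stated: deducing ``almost-radiation'' from ``(iii) holds and $\f \in \LLambda m$'' is not visibly easier than (iii) $\To$ (i) itself, so the argument threatens to be circular. The confusion comes from tacitly using the Proposition \ref{Nongenerators} ``good $t$'' (too strong), which makes the complementary case (merely $\LLambda m$) too weak.

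One small correction in (i) $\To$ (ii): ``submodules containing a direct summand of finite-rank complement are again direct summands'' is false as stated (and ``finite'' should read ``$<\vk$''). The clean reason $\f(U) \cap U$ is a direct summand is that it is an intersection of two direct summands of $A$, and $A/(\f(U)\cap U)$ embeds in the free group $A/U \oplus A/\f(U)$, hence is itself free.
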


\begin{proof} (i) $\To$ (ii). Suppose that $\f \in \Gamma$ is an almost-radiation
and let $M$ be a large direct summand on which $\f$ acts either
as $\id_M,$ or as $-\id_M.$ Now if $U \sle A$ is any moietous
direct summand, $U_0=U \cap M$ is a large direct summand of the group $U,$ and since evidently
$$
\f(U_0)=\f(U \cap M)=U\cap M=U_0,
$$
the subgroup $U_0$ is a large direct summand of the group $\f(U).$

(ii) $\To$ (iii). Immediate.

(iii) $\To$ (i).
 First, we are going to prove that the assumption
that for every small direct summand $X$  there
is a unimodular element $t \in A$ such that
\begin{equation}
X \to t \text{ and } \f(t) \notin \str{X,t}
\end{equation}
leads to a contradiction.
Indeed, if $X \to t$
for a unimodular element $t \in A,$
(\theequation) implies existence of
a unimodular element $z$ with the property
$$
\str{X,t} \to z
$$
such that
$$
\f(t)=x+kt + mz
$$
for a suitable $x \in X,$ $k \in \Z$ and
a nonzero $m \in \Z;$ in effect, any basis of $X$ can then be extended
to a unimodular set of $A$ by adding to it the elements $t$
and $z.$
This, as it has been outlined above in Remark \ref{BrTrIndArg},
implies existence of a moiety $\cT$ of
a basis of $A$ such that
$$
\f(\str\cT ) \cap \str\cT=\{0\},
$$
which is impossible.

Thus we see that there exists a small direct summand
$X \le A$ such that
$$
X \to t \To \f(t) \in \str{X,t}
$$
for all unimodular elements $t \in A,$ or, equivalently,
$$
\f(t)=x(t) + k(t) t,
$$
where $x(t) \in X$ and $k(t) \in \Z,$ for all unimodular elements $t \in A$ satisfying $X \to t.$
However, quite similar to what we have seen in the proof of Proposition \ref{Nongenerators}, there is therefore
a large direct summand $M$ of $A,$ lying in a direct
complement of $X$ to $A,$ such that
$$
\f(t) = k(t) t
$$
for all unimodular elements $t \in M,$
which means
that either $\f|_M=\id_M,$ or $\f|_M=-\id_M,$ as required.
\end{proof}

Before moving on, observe that a particular element $\f \in \aut A$ can belong
to several subgroups $\LLambda k.$

\begin{Lem} \label{nongensnonrads}
Let $\f$ be an automorphism of the group $A$
which is neither an almost-radiation,
nor a normal generator of the group $\aut A.$ Then there is a unimodular set
$$
\{ x_i, y_i : i \in I\}
$$
of $A$ of cardinality $\vk$ such that for every $i \in I$
there are natural numbers $k_i,m_i$ satisfying
$$
\f x_i = k_i x_i + m_i y_i
$$
where $m_i \ge 2$ and $\f \in \LLambda {m_i}.$
\end{Lem}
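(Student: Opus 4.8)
The plan is to combine the machinery developed in the proof of Proposition~\ref{Nongenerators} --- which, by Theorem~\ref{normal_generators_both}, applies to $\f$ since $\f$ is not a normal generator --- with the hypothesis that $\f$ is not an almost-radiation, and then to assemble the required set by means of the transfinite-induction scheme described in Remark~\ref{BrTrIndArg}. As a preliminary, I would fix, as in Claim~1 of the proof of Proposition~\ref{Nongenerators}, a small direct summand $X_0$ of $A$ such that $X_0 \to t$ implies $\str{X_0,t} \not\to \f(t)$ for every unimodular $t \in A$; since the relation $B \not\to u$ is inherited by summands $B' \sge B$, this property survives enlarging $X_0$, so I may assume $X_0 = \str{\cB_0}$ for a subset $\cB_0$ of a fixed basis $\cB = (b_i : i < \vk)$ of $A$ with $|\cB_0| < \vk$.

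The heart of the matter is the following local step: for every small direct summand $X \sge X_0$ of $A$ there are a unimodular element $x$ with $X \to x$, a unimodular element $y$ with $\str{X,x} \to y$, and natural numbers $k \ge 1$ and $m \ge 2$ such that $\f x = kx + my$ and $\f \in \LLambda m$. To prove it, I would take a direct complement $T_0$ of $X$ in $A$ and put $T_1 = T_0 \cap \f\inv(T_0)$, a large direct summand with $\f(T_1) \le T_0$. Since $X \sge X_0$, every unimodular $t$ with $X \to t$ satisfies $X_0 \to t$, whence $\str{X_0,t} \not\to \f(t)$ and so $\str{X,t} \not\to \f(t)$; thus, exactly as in the proof of Proposition~\ref{Nongenerators}, every unimodular $t \in T_1$ admits a decomposition $\f t = k(t) t + m(t) z(t)$ in which $k(t) \in \Z$, the number $m(t) \in \N \setminus \{1\}$ is uniquely determined, and $z(t)$ is a unimodular element of $T_0$ with $\str{X,t} \to z(t)$. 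Were $m(t) = 0$ for every unimodular $t \in T_1$, then $\f$ would fix each element of the large summand $T_1$, or send each such element to its opposite, and so $\f$ would be an almost-radiation, contrary to hypothesis. Hence there is a unimodular $t_0 \in T_1$ at which $m(t_0) = m_0$ is the least positive value of $m(\cdot)$, and then $m_0 \ge 2$; writing $\f t_0 = k_0 t_0 + m_0 z_0$ with $\{t_0, z_0\}$ a unimodular pair of $T_0$, I would set $x = t_0$, $y = z_0$, $m = m_0$ and $k = k_0$, after replacing $z_0$ by $z_0 - t_0$ and $k_0$ by $k_0 + m_0$ finitely often so as to make $k_0 \ge 1$ (legitimate since $\gcd(k_0, m_0) = 1$). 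As $t_0, z_0 \in T_0$ and $\{t_0, z_0\}$ is unimodular, we have $X \to x$ and $\str{X,x} \to y$.

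It remains, still within the local step, to check that $\f \in \LLambda{m_0}$. Following the concluding part of the proof of Proposition~\ref{Nongenerators}, I would decompose $T_0 = \str{t_0,z_0} \oplus T_0'$, set $T_2 = T_1 \cap T_0'$ and $T_3 = T_2 \cap \f\inv(T_2)$ --- both large direct summands with $\f(T_3) \le T_2$ --- and carry out the very same divisibility computations to deduce that $m_0 \mid m(t)$ and $m_0 \mid (k(t) - k_0)$ for every unimodular $t \in T_3$. By linearity this yields $\f u \equiv k_0 u \Mod{m_0}$ for all $u$ in the large direct summand $\str{t_0} \oplus T_3$, i.e. $\f \in \LLambda{m_0}$, which finishes the proof of the local step.

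Granted the local step, I would conclude by the transfinite induction of Remark~\ref{BrTrIndArg}, proceeding as in the proof of Claim~1 of Proposition~\ref{Nongenerators}: starting from $\cC_0 = \cB_0$, one builds an increasing chain $(\cC_i : i < \vk)$ of unimodular sets of cardinality $< \vk$, each spanning the same direct summand as a subset of $\cB$ and whose union is a basis of $A$; at a successor stage $i = j+1$ one applies the local step to the small summand $X = \str{\cC_j} \sge X_0$, obtaining $x_i, y_i$ and $k_i, m_i$, adjoins to the corresponding subset $\cB_j \sle \cB$ a minimal finite $\cB_j'$ with $b_i, x_i, y_i \in \str{\cB_j \cup \cB_j'}$, and takes $\cC_i$ to be a basis of $\str{\cB_j \cup \cB_j'}$ extending $\cC_j \cup \{x_i, y_i\}$ --- which is possible because $\str{\cC_j, x_i, y_i} = \str{\cC_j} \oplus \str{x_i} \oplus \str{y_i}$ is a direct summand of $A$ by the properties of $x_i$ and $y_i$. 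The resulting family $\{x_i, y_i : i < \vk\}$ is then a unimodular set of cardinality $\vk$ with $\f x_i = k_i x_i + m_i y_i$, $m_i \ge 2$ and $\f \in \LLambda{m_i}$ for each $i$, as required. I expect the main obstacle to be the local step, and within it the demand that the coefficient $m_i$ of a decomposition $\f x_i = k_i x_i + m_i y_i$ with $\{x_i, y_i\}$ unimodular be at once $\ge 2$ and a level of $\f$: this is exactly what forces one to work with the least positive value $m_0$ of $m(\cdot)$ --- automatically at least $2$, and a level of $\f$ by the divisibility argument --- while the hypothesis that $\f$ is not an almost-radiation is precisely what makes this least value be attained.
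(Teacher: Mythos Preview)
Your proposal is correct and follows essentially the same route as the paper's proof: both establish the identical local step --- for every small direct summand containing the $X_0$ supplied by Claim~1 of Proposition~\ref{Nongenerators}, the minimal positive value $m_0$ of $m(\cdot)$ on the associated large summand $T_1$ furnishes a pair $(t_0,z_0)$ with $\f t_0 = k_0 t_0 + m_0 z_0$ and $\f \in \LLambda{m_0}$, the latter via the divisibility computations of Claim~2 --- and then invoke the transfinite-induction scheme of Remark~\ref{BrTrIndArg}. The only cosmetic difference is that the paper enlarges an arbitrary small $X$ to some $Y \sge X + X_0$, whereas you restrict at the outset to $X \sge X_0$ and start the induction with $\cC_0 = \cB_0$; these are equivalent formulations.
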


\begin{proof} We claim that for every small
direct summand $X$ of $A$ there are unimodular
elements $t,z \in A$ such that
$$
\f t = kt+m z
$$
where $X \to t$ and $\str{X,t} \to z$
and $k,m$ are natural numbers such that
$m \ge 2$ and $\f \in \LambdamA.$
The result will then follow by
Remark \ref{BrTrIndArg}.

Let us prove the claim.
Indeed, since $\f$
is not a normal generator, there is,
by Claim 1 in the proof of Proposition \ref{Nongenerators},
a small direct summand $X_0$ such that
$$
X_0 \to t \To \str{X_0,t} \not\to \f(t)
$$
for all unimodular elements $t \in A.$
Next, given any small direct summand
$X,$ find a small direct summand $Y$ for which
we have
$$
X +X_0 \le Y.
$$
Let
$$
A = Y \oplus T_0.
$$
Set
$$
T_1=\f\inv(T_0) \cap T_0.
$$
Then for every unimodular element $t \in T_1,$
$$
\f(t) = k(t) t + m(t) z(t)
$$
where $k(t) \in \Z,$ $m(t) \in \N \setminus \{1\},$ and $z(t) \in T_0$
is a unimodular element of $A$ with $\str{Y,t} \to z(t).$
Now since $\f$ is not an almost-radiation,
the function $m(t)$ does not vanish on $T_1.$
Let then $m_0=m(t_0),$ where $t_0$ is a unimodular
element of $T_1,$ be the minimal positive
value of the function $m(t)$ on the set of unimodular
elements of $T_1.$
Clearly then,
$$
X \to t_0 \text{ and } \str{X,t_0} \to z(t_0),
$$
Finally, as we saw in the proof
of Claim 2 in the proof of
Proposition \ref{Nongenerators}, $\f \in \LLambda{m_0},$ as required.
\end{proof}

\begin{Lem} \label{normsubs_finite_tuples}
Let $N$ be a proper normal subgroup of the group $\aut A$
and let $\f_1,\f_2,\ldots,\f_n$ be elements
of $N.$ Then there is a natural number $m \ge 2$ such that
all $\f_1,\f_2,\ldots,\f_n$ are contained in $\LambdamA.$
\end{Lem}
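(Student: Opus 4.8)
The plan is to build, using Lemma \ref{nongensnonrads}, a unimodular set of $A$ on which all of $\f_1,\dots,\f_n$ act in a controlled way, and then to read off the common level $m$ from the arithmetic of the coefficients that occur; the single problematic case will be ruled out by showing that it would place a normal generator of $\aut A$ inside the proper subgroup $N$.

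First, some reductions. Since $\langle\langle\f_i\rangle\rangle\subseteq N\ne\aut A$, none of the $\f_i$ is a normal generator of $\aut A$. An almost-radiation acts as $\pm\id$ on a large direct summand and so lies in $\LLambda m$ for every $m\ge 0$; hence those $\f_i$ that are almost-radiations impose no condition on a common level and may be set aside (if every $\f_i$ is an almost-radiation, any $m\ge 2$ works). Thus we may assume that each $\f_i$ is neither an almost-radiation nor a normal generator. We shall also use the obvious fact that $\f\in\LLambda m$ and $d\mid m$ imply $\f\in\LLambda d$.

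Apply Lemma \ref{nongensnonrads} to each $\f_i$ and, following the transfinite-induction scheme of Remark \ref{BrTrIndArg}, construct a unimodular set
$$
\{x_\gamma:\gamma<\vk\}\cup\{y_{i,\gamma}:1\le i\le n,\ \gamma<\vk\}
$$
of $A$ with pairwise distinct members, together with natural numbers $k_{i,\gamma}$ and $m_{i,\gamma}\ge 2$, such that
$$
\f_i x_\gamma=k_{i,\gamma}x_\gamma+m_{i,\gamma}y_{i,\gamma}
\qquad\text{and}\qquad
\f_i\in\LLambda{m_{i,\gamma}}
$$
for all $i$ and $\gamma$. If for some $\gamma<\vk$ the number $d=\gcd(m_{1,\gamma},\dots,m_{n,\gamma})$ is at least $2$, then for any prime $p\mid d$ we have $p\mid m_{i,\gamma}$ and $\f_i\in\LLambda{m_{i,\gamma}}$, hence $\f_i\in\LLambda p$ for every $i$, and $m=p$ is the required level. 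It therefore remains to rule out the possibility that $\gcd(m_{1,\gamma},\dots,m_{n,\gamma})=1$ for every $\gamma<\vk$. Assume this, and fix integers $c_{i,\gamma}$ with $\sum_{i}c_{i,\gamma}m_{i,\gamma}=1$. A greatest-common-divisor manipulation in the spirit of the classical arguments for $\mathrm{SL}_n(\Z)$ --- forming, for each $\gamma$, a suitable product of powers of conjugates of the $\f_i$, the conjugations being used to carry the various $y_{i,\gamma}$ into one common direction $v_\gamma$, and then assembling these over all $\gamma<\vk$ --- produces an automorphism $g\in\langle\langle\f_1,\dots,\f_n\rangle\rangle$ for which the moietous direct summand $W=\str{x_\gamma:\gamma<\vk}$ satisfies that $W\oplus g(W)$ is a direct summand of $A$. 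By Theorem \ref{normal_generators_both}, $g$ is then a normal generator of $\aut A$, whence $\langle\langle\f_1,\dots,\f_n\rangle\rangle=\aut A\subseteq N$, contradicting the properness of $N$. This rules out the bad case and proves the lemma.

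The heart of the argument --- and the step I expect to be the main obstacle --- is the construction behind the two displayed equations. For a single automorphism this is precisely Lemma \ref{nongensnonrads}; the point here is to treat $\f_1,\dots,\f_n$ simultaneously: at each step of the transfinite induction one must find a single unimodular element $x_\gamma$, together with the finitely many auxiliary elements $y_{i,\gamma}$ keeping the growing set unimodular, on which every $\f_i$ acts in the prescribed shape. This needs a joint genericity argument that merges, for the $n$ automorphisms at once, the direct-complement and minimal-defect constructions from the proof of Proposition \ref{Nongenerators}. A second delicate point is that Lemma \ref{nongensnonrads} controls $\f_i$ only on the $x_\gamma$ and says nothing about $\f_i y_{i,\gamma}$, so in the greatest-common-divisor step the conjugating automorphisms and the error terms they introduce must be chosen carefully, so that $W\oplus g(W)$ is a genuine direct summand of $A$ rather than merely a finite-index subgroup of one.
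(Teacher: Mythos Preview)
Your global strategy coincides with the paper's: assume there is no common level and manufacture a normal generator inside $N$. The divergence, and the gap, is in how the data are set up.

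You attempt to produce a \emph{single} family $\{x_\gamma\}$ on which every $\f_i$ simultaneously acts as $\f_i x_\gamma=k_{i,\gamma}x_\gamma+m_{i,\gamma}y_{i,\gamma}$ with $\f_i\in\LLambda{m_{i,\gamma}}$ and with $\{x_\gamma\}\cup\{y_{i,\gamma}\}$ unimodular. But Lemma \ref{nongensnonrads} produces, for one $\f$, a very particular $t$: a minimizer of the defect function $m(\cdot)$ over a large summand, and it is precisely this minimality that yields $\f\in\LLambda{m(t)}$. For a generic $t$ one only knows $m_0\mid m(t)$ with $\f\in\LLambda{m_0}$; if $m(t)>m_0$ then writing $\f(t)=kt+m_0\bigl((m(t)/m_0)z\bigr)$ destroys unimodularity of the second coordinate. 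There is no reason the minimizers for $\f_1,\dots,\f_n$ should coincide, and your sketch does not supply one. You correctly flag this as the main obstacle; it is a genuine one, and the subsequent ``gcd manipulation'' step inherits the difficulty, since you only control each $\f_i$ on the $x_\gamma$ and must still assemble, over all $\gamma$, a single product of conjugates.

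The paper never asks for a common $t$. It fixes a basis partitioned into moieties $\cX\sqcup\cY_1\sqcup\cdots\sqcup\cY_n\sqcup\cY_{n+1}$, applies Lemma \ref{nongensnonrads} to $\f_1$ alone, and \emph{conjugates} so that a conjugate $\f_1'$ sends $x_i\mapsto k_i^1x_i+m_i^1y_i^1$ with $\f_1\in\LLambda{m_i^1}$. Because $\f_1'(\cX)\subset\str{\cX,\cY_1}$, the set $\f_1'(\cX)\sqcup\cY_2$ is again unimodular of size $\vk$, so one may conjugate $\f_2$ (using its own Lemma \ref{nongensnonrads} data) to act on $\f_1'(\cX)$ with output in $\cY_2$, and so on. The single product $\psi=\f_n'\cdots\f_1'$ then satisfies
\[
\psi x_i=\Bigl(\prod_s k_i^s\Bigr)x_i+\sum_{s=1}^{n-1}\Bigl(\prod_{t>s}k_i^t\Bigr)m_i^s\,y_i^s+m_i^n\,y_i^n,
\]
and under the contradiction hypothesis $\gcd(m_i^1,\dots,m_i^n)=1$ the $y$-part is primitive, so $\{x_i,\psi x_i:i\in I\}$ is unimodular and Proposition \ref{Normalgenerators_suff_part} finishes. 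This staircase of disjoint output moieties is exactly what replaces both your ``joint genericity'' step and your ``carry the $y_{i,\gamma}$ into a common direction'' step: each $\f_s'$ is a single global conjugate, and the outputs are separated by construction, so no per-$\gamma$ assembly is needed.
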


\begin{proof} Let $n \ge 2.$ Understandably, none of $\f_i$ is a normal
generator of the group $\aut A,$ and we may
assume that none of $\f_i$ is an almost-radiation.
Suppose, towards a contradiction, that whenever
$$
\f_1 \in \LLambda{s_1}, \f_2 \in \LLambda{s_2},\ldots, \f_n \in \LLambda{s_n}
$$
where $s_i \ge 2,$ we have that
$$
\gcd(s_1,s_2,\ldots,s_n)=1.
$$
For simplicity's sake, we shall consider the case when $n=3,$
the general case being similar.
Consider a basis $\cB$
of $A$ partitioned into moieties as follows:
$$
\cB = \cX \sqcup \cY_1 \sqcup \cY_2 \sqcup \cY_3 \sqcup \cY_4.
$$
Let
$$
\cX =(x_i : i \in I) \text{ and } \cY_s=(y^s_i : i \in I), \qquad (s=1,2,3,4).
$$
By Lemma \ref{nongensnonrads}, the action of a suitable conjugate $\f_1'$ of $\f_1$
on $\cX$ is given by
$$
\f_1' x_i = k^1_i x_i + m^1_i  y^1_i
$$
where
$$
k^1_i,m^1_i \in \N,\quad \gcd(k^1_i,m^1_i)=1, \text{ and } \f_1 \in \LLambda{m^1_i}
$$
for all $i \in I.$ Observe that due to $\f_1'(\cX) \le \str{\cX,\cY_1},$
$\f_1'(\cX) \sqcup \cY_2$ is a unimodular set of $A,$ and then
there is a conjugate $\f_2'$ of $\f_2$ such that
$$
\f_2'( \f_1' x_i ) = k^2_i \f_1' x_i + m^2_i y^2_i, \qquad (i \in I),
$$
and due to $\f_2' \f_1' (\cX) \le \str{\cX,\cY_1,\cY_2}$
there is a conjugate $\f_3'$ of $\f_3$ such that
$$
\f_3' (\f_2' \f_1' x_i) = k^3_i \f_2' \f_1' x_i + m^3_i y^3_i, \qquad (i \in I),
$$
where
$$
k^s_i,m^s_i \in \N,\quad \gcd(k^s_i,m^s_i)=1, \quad \f_s \in \LLambda{m^s_i}
$$
for $s=2,3$ and for all $i \in I.$ Moreover, recall that we have
$$
\gcd(m^1_i,m^2_i,m^3_i)=1
$$
for all $i \in I.$ Written in detail, the action
of $\psi=\f_3' \f_2' \f_1'$ on $\cX$ is given by
$$
\psi x_i= k^3_i k^2_i k^1_i x_i + k^3_i k^2_i m^1_i y^1_i + k^3_i m^2_i y^2_i + m^3_i y^3_i
$$
where $i$ ranges over $I.$ However, since evidently
$$
\gcd( k^3_i k^2_i m^1_i, k^3_i m^2_i, m^3_i)=1, \qquad (i \in I),
$$
we obtain that
$$
\{x_i, \psi x_i : i \in I\}
$$
is a unimodular set of $A,$ and hence, by Proposition
\ref{Normalgenerators_suff_part},  $\psi$ is a normal generator
of the group $\aut A,$ which is impossible.

Consequently, there are natural numbers $s_1,s_2,s_3 \ge 2$
such that
$$
\f_1 \in \LLambda{s_1}, \f_2 \in \LLambda{s_2}, \f_3 \in \LLambda{s_3}
$$
satisfying
$$
\gcd(s_1,s_2,s_3) \ne 1,
$$
as required.
\end{proof}

\begin{Rem} \label{cY1-cYlp1}
\rm Generalizing somewhat the corresponding part of the proof of Lemma \ref{normsubs_finite_tuples},
one easily sees that the following statement is true. Suppose automorphisms $\f_1,\f_2,\ldots,\f_\ell$
of the group $A$ (or their suitable conjugates) all act on a certain unimodular set
$\{a_i,b_i : i \in I\}$ of $A$ of cardinality $\vk$ so that
$$
\f_s a_i = k_i^s a_i + m_i^s b_i,
$$
where $k_i^s,m_i^s \in \Z,$
$$
\gcd(k_i^s,m_i^s)=1 \text{ and } \f_s \in \LLambda{m_i^s}
$$
for all $i \in I$ and for all $s=1,2,\ldots,\ell.$ Then there is a
basis
$$
\cX \sqcup \cY_1 \sqcup \cY_2 \sqcup \ldots \sqcup \cY_\ell \sqcup \cY_{\ell+1}
$$
of the group $A,$ where
$$
\cX=(x_i : i \in I) \text{ and } \cY_s=(y_i^s :i \in I)
$$
for all $s=1,2,\ldots,\ell$ on which an appropriate product
$$
\psi=\f_\ell' \ldots \f_2' \f_1'
$$
of conjugates of $\f_\ell,\ldots,\f_2,\f_1,$ respectively,
acts so that
\begin{equation}
\psi x_i = \left( \prod_{s=1}^\ell k_i^s \right) x_i + \sum_{s=1}^{\ell-1} \left( \prod_{t=s+1}^\ell k_i^t\right) m_i^s y_i^s +m_i^\ell y_i^\ell.
\end{equation}
for all $i \in I.$ Observe also that for every $i \in I,$ the element in the right-hand
side of (\theequation) can be written in the form
$$
k_i x_i + m_i z_i
$$
where $k_i \in \Z,$ $z_i$ is a unimodular element of $A,$ and
\begin{equation} \label{gcd_in_prod_o_conjs}
m_i=\gcd(m_i^1,m_i^2,\ldots,m_i^\ell).
\end{equation}
Furthermore, $\{x_i,z_i : i \in I\}$ is a unimodular set of $A$ of
cardinality $\vk.$ $\triangle$
\end{Rem}

Let $\f$ be an automorphism of $A.$ Define
the set $\nu(\f)$ of prime numbers so that
$$
p \in \nu(\f) \iff \f \in \LLambda p.
$$
We will denote by $\mathbf P$ the set
of prime numbers of $\N,$ and, given a subset
$P \sle \mathbf P,$ we will denote by $\Omega_P$ the normal
subgroup $\bigcap_{p \in P} \LLambda p.$

\begin{Lem} \label{Lambda_cF}
Let $N$ be a proper normal subgroup of
the group $\aut A.$ Then

{\rm (i)} $\{\nu(\f) : \f \in N\}$ is
a centered family over $\mathbf P;$

{\rm (ii)} if $\cF$ is a filter over $\mathbf P,$
then the set
$$
\LLambda \cF = \bigcup_{P \in \cF} \Omega_P
$$
is a normal subgroup of $\aut A;$

{\rm (iii)} there exists an ultrafilter $\cF$ over
$\mathbf P$ such that $N \le \LLambda \cF.$
\end{Lem}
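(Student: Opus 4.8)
The plan is to deduce all three parts from Lemma~\ref{normsubs_finite_tuples} together with one elementary observation: $\LLambda m\sle\LLambda d$ whenever $d\mid m$, since a congruence $\f x\equiv kx\Mod m$ holding for all $x$ in a large direct summand also holds modulo $d$. In particular, if $\f\in\LLambda m$ then every prime divisor of $m$ belongs to $\nu(\f)$, so $\nu(\f)\ne\emptyset$. For (i), recall that a family of sets is centered when each of its finite subfamilies has nonempty intersection; the family $\{\nu(\f):\f\in N\}$ is nonempty because $\id_A\in N$ and $\nu(\id_A)=\mathbf P$. Given $\f_1,\dots,\f_n\in N$, I would apply Lemma~\ref{normsubs_finite_tuples} (which is available since $N$ is proper) to obtain an $m\ge2$ with $\f_1,\dots,\f_n\in\LLambda m$, and then pick any prime $p\mid m$: it lies in $\nu(\f_1)\cap\dots\cap\nu(\f_n)$. (For $n=1$ one may instead quote Proposition~\ref{Nongenerators}, together with the fact that every almost-radiation lies in $\LLambda2$.)

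For (ii), each $\LLambda p$ with $p$ prime is a normal subgroup of $\Gamma$, hence so is $\Omega_P=\bigcap_{p\in P}\LLambda p$ for every $P\sle\mathbf P$; moreover $P\sle Q$ gives $\Omega_Q\sle\Omega_P$. Since a filter $\cF$ contains $\mathbf P$, we have $\id_A\in\Omega_{\mathbf P}\sle\LLambda\cF$. If $\f\in\Omega_P$ and $\psi\in\Omega_Q$ with $P,Q\in\cF$, then $P\cap Q\in\cF$ and, by monotonicity, $\f,\psi\in\Omega_{P\cap Q}$; as this is a subgroup, $\f\psi\inv\in\Omega_{P\cap Q}\sle\LLambda\cF$. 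So $\LLambda\cF$ is a subgroup, and it is normal since for $g\in\Gamma$ and $\f\in\Omega_P\sle\LLambda\cF$ we get $g\f g\inv\in\Omega_P\sle\LLambda\cF$. This step is the only one needing a little care: the point is that $\LLambda\cF$ is genuinely closed under multiplication and not just under conjugation, which is exactly where closure of $\cF$ under finite intersections, combined with $P\sle Q\To\Omega_Q\sle\Omega_P$, is used.

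For (iii), by (i) the family $\{\nu(\f):\f\in N\}$ is a centered family of nonempty subsets of $\mathbf P$, so by the ultrafilter lemma it extends to an ultrafilter $\cF$ over $\mathbf P$. Then for each $\f\in N$ we have $\nu(\f)\in\cF$, while straight from the definition of $\nu(\f)$ one has $\f\in\bigcap_{p\in\nu(\f)}\LLambda p=\Omega_{\nu(\f)}\sle\LLambda\cF$; hence $N\sle\LLambda\cF$. I do not anticipate any real obstacle here — essentially all the content of the lemma is concentrated in Lemma~\ref{normsubs_finite_tuples}, which is already proved.
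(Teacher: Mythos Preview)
Your proposal is correct and follows essentially the same approach as the paper: part (i) via Lemma~\ref{normsubs_finite_tuples}, part (ii) by using closure of $\cF$ under finite intersections together with the monotonicity $P\sle Q\Rightarrow\Omega_Q\sle\Omega_P$, and part (iii) by extending the centered family to an ultrafilter. Your write-up is in fact more explicit than the paper's (which simply cites Lemma~\ref{normsubs_finite_tuples} for (i) and the ultrafilter lemma for (iii)); in particular, your observation that $\LLambda m\sle\LLambda d$ for $d\mid m$ is exactly the bridge needed to pass from $\f_i\in\LLambda m$ to a common prime in $\bigcap_i\nu(\f_i)$, and your verification that $\f\in\Omega_{\nu(\f)}$ makes the inclusion $N\le\LLambda\cF$ transparent.
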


\begin{proof} (i) By Lemma \ref{normsubs_finite_tuples}.

(ii) Clearly, the set $\LLambda \cF$
is closed under taking inverses and under
conjugation by elements of $\aut A.$ Suppose
that $\f, \psi \in \LLambda \cF.$ Hence
$\f \in \Omega_P$ and $\psi \in \Omega_Q$
for some $P,Q \in \cF.$ But then $P \cap Q$
is also in $\cF,$ and so $\f,\psi$
are contained in the subgroup $\Omega_{P \cap Q},$
which implies that the product of $\f$
and $\psi$ is also contained in $\LLambda \cF.$

(iii) It is well-known any centered family
over a given set can be extended to a filter
over this set, and, in turn, any filter
is contained in a maximal filter (ultrafilter).
\end{proof}

In the case when $\cF$ is a principal ultrafilter
over $\mathbf P,$ the group $\LLambda \cF$
is of course equal to some subgroup
$\LLambda p$ for a suitable prime number $p.$
In the case when $\cF$ is a nonprincipal
ultrafilter over $\mathbf P,$ the situation
is no longer uniform. Indeed, on one hand,
Proposition \ref{uncount_conf_no_inf_descent} below
implies that $\LLambda \cF$ is equal to the
subgroup $\aR(A)$ of almost-radiations if
the rank of $A$ has {\it uncountable} cofinality.
On the other hand, we shall see below that in the case when
the rank of $A$ is of {countable} cofinality,
the group $\LLambda \cF$ is in fact a maximal
normal subgroup of $\aut A.$

\begin{Lem} \label{km-aut_in_LLs}
Let $s \ge 2$ be a natural number and let $\f \in \LLambda s.$
Suppose that there is a unimodular set $\{x_i,y_i : i \in I\}$
of cardinality $\rank(A)$ and integer numbers $k,m$ such that
$$
\f(x_i)=k x_i + m y_i
$$
for all $i \in I.$ Then $s$ is a multiple of $m.$
\end{Lem}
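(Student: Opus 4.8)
The plan is to reduce everything modulo $s$ and then run a pigeonhole argument over the $\vk$ many indices. Since $\f \in \LLambda s$, I would first fix a large direct summand $M$ of $A$ together with an integer $j$ such that $\f(x) \equiv jx \Mod{sA}$ for every $x \in M$, and put $\mu = \rank(A/M)$, so that $\mu < \vk$. Because the unimodular set $\{x_i, y_i : i \in I\}$ has cardinality $\vk$ and $\vk$ is infinite, $|I| = \vk$. On the other hand $A/(M+sA)$ is the reduction modulo $s$ of the free abelian group $A/M$ of rank $\mu$, hence isomorphic to a direct sum of $\mu$ copies of $\Z/s\Z$; its cardinality is finite when $\mu$ is finite and equal to $\mu$ when $\mu$ is infinite, so in every case $|A/(M+sA)| < \vk = |I|$. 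Consequently the map $i \mapsto x_i + (M+sA)$ cannot be injective, and I may fix two distinct indices $i_1, i_2 \in I$ with $x_{i_1} - x_{i_2} \in M+sA$.

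Next I would write $x_{i_1} - x_{i_2} = w + sa$ with $w \in M$ and $a \in A$. Since $w \equiv x_{i_1} - x_{i_2} \Mod{sA}$ and $\f(w) \equiv jw \Mod{sA}$, applying $\f$ gives $\f(x_{i_1}) - \f(x_{i_2}) \equiv j(x_{i_1} - x_{i_2}) \Mod{sA}$. On the other hand, the hypothesis yields $\f(x_{i_1}) - \f(x_{i_2}) = k(x_{i_1} - x_{i_2}) + m(y_{i_1} - y_{i_2})$, and subtracting,
$$
(k-j)(x_{i_1} - x_{i_2}) + m(y_{i_1} - y_{i_2}) \in sA.
$$
Finally, $\{x_{i_1}, x_{i_2}, y_{i_1}, y_{i_2}\}$ is part of a basis of $A$, because $\{x_i, y_i : i \in I\}$ is a unimodular set; an element of $A$ supported on these four basis vectors belongs to $sA$ precisely when each of its four coordinates is divisible by $s$, and reading off the coordinate along $y_{i_1}$ of the displayed element gives $s \mid m$.

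The one step that genuinely uses the hypotheses quantitatively is the inequality $|A/(M+sA)| < |I|$: it rests on $M$ being honestly large, so that $\rank(A/M) < \vk$, and on the given unimodular set having full cardinality $\vk$; if either hypothesis is weakened the pigeonhole fails. Everything after that is a bookkeeping computation in the free group $A$ modulo $s$ against a four-element part of a basis, and needs no transfinite induction.
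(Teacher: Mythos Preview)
Your proof is correct and reaches the right conclusion $s \mid m$. It does, however, proceed along a different line from the paper's.

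The paper exploits the structural fact (stated at the opening of Section~1) that a moietous direct summand and a large direct summand meet in a moietous direct summand. It sets $X=\langle x_i : i\in I\rangle$, picks a unimodular element $x\in X\cap M$, observes that such an $x$ still satisfies $\f(x)=kx+my$ with $\{x,y\}$ unimodular, and then compares $kx+my=lx+sz$ inside the rank-two summand $\langle x,y\rangle$ to force $s\mid m$. Your argument sidesteps the intersection lemma entirely: instead of finding a single element of $M$ inside $X$, you pigeonhole the $\vk$ many $x_i$ into the quotient $A/(M+sA)$, whose cardinality is at most $\max(\mu,\aleph_0)<\vk$, to obtain $x_{i_1}-x_{i_2}\in M+sA$; then you read off the $y_{i_1}$-coordinate of the resulting congruence against the four basis elements $x_{i_1},x_{i_2},y_{i_1},y_{i_2}$.

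What each buys: the paper's route is a couple of lines shorter once the intersection fact is in hand, but it silently uses that a unimodular $x=\sum c_i x_i\in X$ gives a unimodular pair $\{x,\sum c_i y_i\}$. Your route needs a small cardinality computation but never leaves the original basis elements, so the final divisibility is completely transparent; it also does not require $\langle x_i\rangle$ to be a direct summand at all, only that the full set $\{x_i,y_i\}$ be unimodular.
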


\begin{proof}
Assume that $M$ is a large direct summand
of $A$ and $l$ is an integer number such that
$$
\f(t) \equiv l t \Mod s
$$
for all $t \in M.$ Consider the moietous direct summand
$X$ generated by all $x_i.$ Then the intersection of
$X$ and $M$ is a nonzero direct summand, and so we can
find a unimodular element $x \in X \cap M$ such that
$$
\f(x) = k x + m y
$$
where $\{x,y\}$ is a unimodular set of $A.$ On the other hand, since $x \in M,$
$$
\f(x) = k x+m y= lx + sz
$$
for some $z \in A.$ Now as $\str{x,y}$ is a direct
summand of $A,$ the element $z$ must belong to
this summand, and the conclusion follows easily.
\end{proof}

\begin{Prop} \label{uncount_conf_no_inf_descent}
Let $\vk=\rank(A)$ have uncountable cofinality.
If $\f \in \aut A$ is neither normal
generator of the group $\aut A,$ nor an
almost-radiation, then there are at most finitely
many natural numbers $s$ such that
$\f \in \LLambda s.$
Consequently, if $(m_n)$ is any sequence of pairwise
distinct natural numbers $\ge 2,$ then
the intersection $\bigcap_n \LLambda {m_n}$
is equal to the group of almost-radiations.
\end{Prop}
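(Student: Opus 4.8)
The plan is to reduce the statement to Lemma~\ref{nongensnonrads} and Lemma~\ref{km-aut_in_LLs} by a single pigeonhole step, which is the only place the uncountable cofinality of $\vk$ is used. I would first apply Lemma~\ref{nongensnonrads} to $\f$: since $\f$ is neither an almost-radiation nor a normal generator of $\aut A$, it provides a unimodular set $\{x_i,y_i:i\in I\}$ of $A$ with $|I|=\vk$ such that for every $i\in I$ there are natural numbers $k_i,m_i$ with $m_i\ge 2$, $\f\in\LLambda{m_i}$, and $\f x_i=k_ix_i+m_iy_i$. I would then partition $I$ according to the value of the pair $(k_i,m_i)$; since these pairs lie in the countable set $\N\times\N$, this splits $I$ into at most countably many pieces. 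Because $\operatorname{cf}(\vk)>\aleph_0$, a set of cardinality $\vk$ is never a union of countably many sets of cardinality $<\vk$, so some piece $J$ has cardinality $\vk$. Thus $\{x_j,y_j:j\in J\}$ is a unimodular set of $A$ of cardinality $\vk$ on which $\f$ acts by $\f x_j=kx_j+my_j$ for one fixed pair $(k,m)$ with $m\ge 2$.

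Next, for any natural number $s\ge 2$ with $\f\in\LLambda s$, I would feed the unimodular set $\{x_j,y_j:j\in J\}$ and the fixed integers $k,m$ into Lemma~\ref{km-aut_in_LLs} --- whose proof picks a unimodular $a\in\str{x_j:j\in J}$ lying in a large direct summand on which $\f$ is a scalar mod $s$, writes $\f a=ka+mb$ with $\{a,b\}$ unimodular, and compares coefficients in a basis of $A$ through $a$ and $b$ to produce an equation $m=s\beta$ with $\beta\in\Z$ --- which yields $s\mid m$. Hence every $s\ge 2$ with $\f\in\LLambda s$ is one of the finitely many divisors of $m$; since moreover $\LLambda 1=\aut A$ while $\f\notin\LLambda 0=\aR(A)$ (as $\f$ is not an almost-radiation), the full set $\{s\in\N:\f\in\LLambda s\}$ is finite. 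This is the first assertion, and I expect the pigeonhole step to be the only delicate point --- everything after it is bookkeeping resting on the two lemmas above.

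For the final assertion, let $(m_n)$ be pairwise distinct natural numbers $\ge 2$. Every almost-radiation lies in every $\LLambda{m_n}$, so $\aR(A)\sle\bigcap_n\LLambda{m_n}$. Conversely, take $\psi\in\bigcap_n\LLambda{m_n}$ and a prime $p\mid m_1$; then $\LLambda{m_1}\sle\LLambda p$, and since $\LLambda p$ is a proper normal subgroup of $\aut A$, the automorphism $\psi$ is not a normal generator. If $\psi$ were also not an almost-radiation, then by the first part of the proposition $\psi$ would lie in $\LLambda s$ for only finitely many $s$, contradicting that all the pairwise distinct $m_n$ qualify. Therefore $\psi\in\aR(A)$, so $\bigcap_n\LLambda{m_n}=\aR(A)$, as claimed.
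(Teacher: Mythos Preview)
Your proof is correct and follows essentially the same route as the paper: apply Lemma~\ref{nongensnonrads}, use uncountable cofinality to pigeonhole down to a single pair $(k,m)$ on a subset of size $\vk$, and invoke Lemma~\ref{km-aut_in_LLs} to force $s\mid m$. Your write-up of the ``Consequently'' clause is more explicit than the paper's (which simply says ``whence the result''), but the argument is the intended one.
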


\begin{proof} By Lemma \ref{nongensnonrads}, there
exists a unimodular set $\{x_i,y_i : i \in I\}$ of $A$
of cardinality $\vk$ such that
$$
\f x_i =k_i x_i + m_i y_i, \qquad (i \in I),
$$
where $k_i,m_i \in \N$ and $m_i \ge 2.$ As $\vk$ is of uncountable cofinality,
there are natural numbers $k,m,$ where $m \ge 2,$
and a subset $J$ of $I$ of cardinality $\vk$ satisfying
$$
\f x_j = k x_j + m y_j, \qquad (j \in J).
$$
Now, by Lemma \ref{km-aut_in_LLs}, whenever $\f \in \LLambda s,$  $s$ divides $m,$ whence the result.
\end{proof}

\begin{Rem} \label{in_inf_many_Lambdas}
\rm In the case when the cardinal $\vk=\rank(A)$ has {\it countable} cofinality,
for every infinite set $\{m_n : n \in \N\}$ of natural numbers, all greater
than $1,$ the intersection
$$
\bigcap_{n \in \N} \LLambda {m_n}
$$
contains automorphisms of $A$ which are not almost-radiations.
Indeed, let
$$
\vk=\sum_{n \in \N} \mu_n,
$$
where $\mu_n$ are cardinals $< \vk,$ and let
$$
\cB = \bigsqcup_{n \in \N} (\cX_n \sqcup \cY_n)
$$
be a basis of $A$ such that
$$
|\cX_n|=|\cY_n|=\mu_n
$$
for all $n \in \N.$ For each $n$ in $\N,$
pick up a bijection $f_n : \cX_n \to \cY_n.$
Define then the automorphism $\f \in \aut A$
by forcing it to act identically on each of the
sets $\cY_n$ and to act as
$$
\f x = x + m_0 m_1 \ldots m_n f_n(x), \qquad (x \in \cX_n)
$$
on each of the sets $\cX_n.$ Clearly, $\f \in \LLambda {m_0}$
and for every $n \ge 1,$ we have that $\f$ preserves
modulo $m_n$ all elements of the large direct
summand
$$
\bigoplus_{j \ge n} \str{\cX_j \sqcup \cY_j}
$$
which is of corank
$$
\sum_{j < n} (|\cX_j|+|\cY_j|)
$$
strictly less than $\vk,$ which means that $\f \in \LLambda {m_n}.$ $\triangle$
\end{Rem}

\begin{Lem} \label{Lambda_cF_count_conf}
Let $\vk=\rank(A)$ be of countable cofinality.

{\rm (i)} Suppose that $P$ is an infinite set of prime
numbers. Then there is an automorphism
$\f \in \aut A$ such that $\nu(\f)=P.$

{\rm (ii)} Let $\cF$ be a nonprincipal ultrafilter
over $\mathbf P.$ Then $\LLambda \cF$ is a maximal
normal subgroup of $\aut A,$ which is not
equal to any subgroup $\LLambda p,$
where $p$ is a prime number.

{\rm (iii)} Let $\cF_1,\cF_2$ be distinct
nonprincipal ultrafilters over $\mathbf P.$
Then $\LLambda {\cF_1} \ne \LLambda {\cF_2}.$
\end{Lem}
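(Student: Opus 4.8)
The plan is to establish part (i) directly and then read off parts (ii) and (iii) from it.

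\textbf{Part (i).} Enumerate $P=\{p_0,p_1,\dots\}$ without repetitions and feed it into the construction of Remark \ref{in_inf_many_Lambdas} with $m_n=p_n$: this produces $\f\in\aut A$ acting on a basis $\bigsqcup_n(\cX_n\sqcup\cY_n)$ (with $|\cX_n|=|\cY_n|<\vk$ and $\sum_n|\cX_n|=\vk$) by $\f y=y$ on $\cY_n$ and $\f x=x+(p_0\cdots p_n)f_n(x)$ on $\cX_n$, and the computation in that Remark already gives $\f\in\LLambda p$ for every $p\in P$, so $P\sle\nu(\f)$. For the reverse inclusion let $q$ be a prime not in $P$, put $V=A/qA$, and observe that, since $q$ divides none of the products $p_0\cdots p_n$, the operator $N=\bar\f-\mathrm{id}_V$ satisfies $N^2=0$ and $\operatorname{Im}N=\ker N=\operatorname{span}\{\,\bar y:y\in\bigcup_n\cY_n\,\}$, a subspace of corank $\vk$. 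If $\f$ were in $\LLambda q$, there would be a large direct summand $M$ of $A$ and $k\in\Z$ with $\f t\equiv kt\Mod q$ on $M$; then the image $\bar M$ of $M$ in $V$ has corank $<\vk$ and satisfies $N\bar t=(k-1)\bar t$ for all $\bar t\in\bar M$, so $\bar M$ lies in the $(k-1)$-eigenspace of $N$ --- which is $\{0\}$ if $k\not\equiv 1\Mod q$ (as $N$ is nilpotent) and is $\ker N$ if $k\equiv 1\Mod q$ --- contradicting in either case that $\bar M$ has corank $<\vk$. Hence $\nu(\f)=P$.

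\textbf{Part (ii).} That $\LLambda\cF$ is a normal subgroup is Lemma \ref{Lambda_cF}(ii); it is proper because each of its elements lies in $\Omega_R$ for some $R\in\cF$, hence in $\LLambda p$ for the (infinitely many, as $\cF$ is nonprincipal) primes $p\in R$, hence is not a normal generator, whereas $\tau$ is; note also that $\aR(A)\sle\Omega_{\mathbf P}\sle\LLambda\cF$. For maximality, let $N$ be a normal subgroup with $\LLambda\cF\subsetneq N$ and choose $\psi\in N\setminus\LLambda\cF$. Then $\psi$ is not an almost-radiation, and if $\psi$ is a normal generator we are done; otherwise Lemma \ref{nongensnonrads} provides a unimodular set $\{x_i,y_i:i\in I\}$ of cardinality $\vk$ with $\psi x_i=k_ix_i+m_iy_i$, $m_i\ge2$, $\psi\in\LLambda{m_i}$, and $\gcd(k_i,m_i)=1$ (automatic, since $\psi x_i$ is unimodular, hence not in $dA$ for $d\ge 2$); consequently every prime $p$ dividing some $m_i$ lies in $\nu(\psi)$, as $\LLambda{m_i}\sle\LLambda p$. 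Since $\psi\notin\LLambda\cF$, no member of $\cF$ is contained in $\nu(\psi)$, so $\nu(\psi)\notin\cF$ and therefore $Q:=\mathbf P\setminus\nu(\psi)\in\cF$; in particular each $m_i$ is coprime to every element of $Q$.

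Now run the construction of part (i) on the unimodular set $\{x_i,y_i:i\in I\}$ itself: extend it to a basis, partition $I=\bigsqcup_n I_n$ with $|I_n|<\vk$, enumerate $Q=\{q_0,q_1,\dots\}$, and let $\chi\in\aut A$ fix every $y_i$ and every added basis vector and send $x_i\mapsto x_i+(q_0\cdots q_n)y_i$ for $i\in I_n$. As in part (i), $Q\sle\nu(\chi)$, so $\chi\in\Omega_Q\sle\LLambda\cF\sle N$, and for $i\in I_n$ one has $\chi\in\LLambda{c_i}$ with $c_i=q_0\cdots q_n$ coprime to $m_i$. Feeding $\psi$ and $\chi$ into Remark \ref{cY1-cYlp1} --- they act on the common unimodular set $\{x_i,y_i\}$ in the prescribed form --- an appropriate product of conjugates of $\psi$ and $\chi$, all of them in $N$, carries $x_i$ to $k_ix_i+\gcd(m_i,c_i)z_i=k_ix_i+z_i$, where $\{x_i,z_i:i\in I\}$ is unimodular of cardinality $\vk$; by Proposition \ref{Normalgenerators_suff_part} this product is a normal generator, so $N=\aut A$. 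Finally, $\LLambda\cF\ne\LLambda p$ for any prime $p$: by part (i) there is $\chi$ with $\nu(\chi)=\mathbf P\setminus\{p\}$, and since $\cF$ is nonprincipal $\mathbf P\setminus\{p\}\in\cF$, whence $\chi\in\LLambda\cF\setminus\LLambda p$.

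\textbf{Part (iii).} If $\cF_1\ne\cF_2$, pick $P\in\cF_1\setminus\cF_2$; then $\mathbf P\setminus P\in\cF_2$, and both $P$ and $\mathbf P\setminus P$ are infinite. By part (i) take $\f$ with $\nu(\f)=P$; then $\f\in\Omega_P\sle\LLambda{\cF_1}$, while $\f\notin\LLambda{\cF_2}$, because $\f\in\Omega_R$ for some $R\in\cF_2$ would force $R\sle\nu(\f)=P$ and hence $P\in\cF_2$. The main obstacle is the maximality argument in (ii): the key points are that $\psi\notin\LLambda\cF$ pushes the ``bad'' prime set $Q=\mathbf P\setminus\nu(\psi)$ into $\cF$ (this is the only place the ultrafilter hypothesis is essential), that the moduli $m_i$ coming with $\psi$ are automatically coprime to products of primes from $Q$, and that part (i) then supplies a companion $\chi\in\LLambda\cF\sle N$ whose moduli are such products, so that Remark \ref{cY1-cYlp1} can synthesize a normal generator inside $N$.
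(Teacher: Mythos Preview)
Your proof is correct. In part (i) you go beyond the paper, which simply says ``apply the above construction to $P$'' and leaves the reverse inclusion $\nu(\f)\subseteq P$ implicit; your nilpotency/eigenspace argument in $A/qA$ is a clean way to supply that missing verification.

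The main divergence is in part (ii). The paper's argument is much shorter: given $\f\notin\LLambda\cF$, it notes $\nu(\f)^c\in\cF$ is infinite, invokes part (i) to produce $\psi\in\LLambda\cF$ with $\nu(\psi)=\nu(\f)^c$, and then simply cites Lemma~\ref{normsubs_finite_tuples} --- since $\nu(\f)\cap\nu(\psi)=\varnothing$, no common $\LLambda m$ contains both, so the pair cannot lie in a proper normal subgroup. You instead unpack the mechanism behind Lemma~\ref{normsubs_finite_tuples}: you build the companion $\chi$ on the \emph{same} unimodular set coming from Lemma~\ref{nongensnonrads}, check that its moduli $c_i=q_0\cdots q_n$ are coprime to the $m_i$, and then feed $\psi,\chi$ into Remark~\ref{cY1-cYlp1} to synthesize a normal generator explicitly. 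Both approaches hinge on the same ultrafilter step ($\nu(\psi)\notin\cF\Rightarrow Q\in\cF$); the paper's buys brevity by treating Lemma~\ref{normsubs_finite_tuples} as a black box, while yours is more self-contained and makes the construction of the normal generator visible. In part (iii) your direct argument that $\f\notin\LLambda{\cF_2}$ is in fact slightly cleaner than the paper's, which again routes through Lemma~\ref{normsubs_finite_tuples}.
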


\begin{proof} (i) Apply the above construction to $P.$

(ii) Let $\f \not\in \Lambda_{\!A}(\cF).$ Then $\nu(\f) \not\in \cF$
and $\nu(\f)^c=\mathbf P \setminus \nu(\f)$ is necessarily infinite, for $\cF$
contains all cofinite subsets of $\mathbf P.$ By (i), there is
$\psi \in \Lambda_{\!A}(\cF)$ satisfying
$$
\nu(\psi) = \nu(\f)^c.
$$
Now, by Lemma \ref{normsubs_finite_tuples}, the normal
closure of $\{\f,\psi\}$ cannot be a proper normal subgroup
of $\aut A.$

To prove the second statement, take any prime
number $p$ and find $\s \in \Lambda_{\!A}(\cF)$
having the property that $\nu(\s)=P \setminus \{p\}.$
Clearly, $\s \notin \LLambda p.$

(iii) Since $\cF_1$ and $\cF_2$ are distinct,
there exists a moiety $P$ of $\mathbf P$
such that $P \in \cF_1$ and $P \notin \cF_2.$
Accordingly, by (i), there are $\f \in \LLambda {\cF_1}$
and $\psi \in \LLambda {\cF_2}$ satisfying
$$
\nu(\f)=P \text{ and } \nu(\psi)=P^c.
$$
Again, since the normal closure of $\{\f,\psi\}$
is equal to $\aut A,$ we get $\psi \not\in \LLambda {\cF_1}.$
\end{proof}

\begin{Th} \label{max_norm_subgrs}
Let $A$ be an infinitely generated free abelian group.

{\rm (i)} If the cardinal $\rank(A)$ is of uncountable cofinality,
then the maximal normal subgroups of the group $\aut A$
are exactly the normal subgroups $\LLambda p,$
where $p$ runs over the set $\mathbf P$ of prime
numbers of $\N.$

{\rm (ii)} If the rank of $A$ is of countable cofinality,
then the maximal normal subgroups of the group $\aut A$ are exactly
the normal subgroups
$$
\LLambda p \text{ and } \LLambda {\cF},
$$
where $p$ runs over $\mathbf P$ and $\cF$
over the family of all nonprincipal ultrafilters
over $\mathbf P.$
\end{Th}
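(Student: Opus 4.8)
The plan is to derive the theorem from the lemmas of this section by checking the two inclusions that the word ``exactly'' demands: that each subgroup on the relevant list is a maximal normal subgroup of $\Gamma=\aut A$, and, conversely, that every maximal normal subgroup of $\Gamma$ lies on that list. The first inclusion is essentially already in hand. Each $\LLambda p$ is a maximal normal subgroup by the observation recalled in the Introduction --- the natural homomorphism $\Gamma\to\aut{A/pA}$ is onto and, by Rosenberg's theorem, $\aut{A/pA}$ has a unique maximal normal subgroup --- and, when $\rank(A)$ has countable cofinality, each $\LLambda\cF$ with $\cF$ a nonprincipal ultrafilter over $\mathbf P$ is a maximal normal subgroup by Lemma \ref{Lambda_cF_count_conf}(ii).

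For the converse, I would take an arbitrary maximal normal subgroup $M$ of $\Gamma$. Being in particular a proper normal subgroup, $M$ is contained in $\LLambda\cF$ for some ultrafilter $\cF$ over $\mathbf P$ by Lemma \ref{Lambda_cF}(iii), and the rest is a case analysis. If $\cF$ is principal, generated by a singleton $\{p\}$, then $\LLambda\cF=\LLambda p$ irrespective of the cofinality of $\rank(A)$; as $\LLambda p$ is a proper normal subgroup, maximality of $M$ gives $M=\LLambda p$. Suppose now $\cF$ is nonprincipal. If $\rank(A)$ has uncountable cofinality, then every member of $\cF$ is infinite, hence a countably infinite subset of the countable set $\mathbf P$, so Proposition \ref{uncount_conf_no_inf_descent} yields $\Omega_P=\aR(A)$ for every $P\in\cF$ and therefore $\LLambda\cF=\bigcup_{P\in\cF}\Omega_P=\aR(A)$; since every almost-radiation lies in $\LLambda 2$, we get $M\le\aR(A)\le\LLambda 2$, and, $\LLambda 2$ being a proper normal subgroup, maximality of $M$ forces $M=\LLambda 2$. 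If instead $\rank(A)$ has countable cofinality, then by Lemma \ref{Lambda_cF_count_conf}(ii) the subgroup $\LLambda\cF$ is itself a proper (indeed maximal) normal subgroup of $\Gamma$, so $M=\LLambda\cF$. In every case $M$ appears on the appropriate list, which, combined with the first inclusion, proves both (i) and (ii).

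To round out the statement I would also check that the listed subgroups are pairwise distinct. First, $\LLambda p\ne\LLambda q$ for distinct primes $p,q$: the automorphism $\f$ acting on a basis $(x_i,y_i:i\in I)$ of $A$ by $\f x_i=x_i+p\,y_i$ and $\f y_i=y_i$ lies in $\LLambda p$ (in fact in $\GGamma p$), whereas by Lemma \ref{km-aut_in_LLs} it lies in no $\LLambda q$ with $q\ne p$. Second, in the countable-cofinality case, $\LLambda\cF\ne\LLambda p$ for every prime $p$ and $\LLambda{\cF_1}\ne\LLambda{\cF_2}$ for distinct nonprincipal ultrafilters $\cF_1,\cF_2$; these are precisely the remaining assertions of Lemma \ref{Lambda_cF_count_conf}(ii) and (iii).

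I do not expect a genuine obstacle here: the substantive work has already been packaged into Lemma \ref{Lambda_cF}, Lemma \ref{km-aut_in_LLs}, Proposition \ref{uncount_conf_no_inf_descent} and Lemma \ref{Lambda_cF_count_conf}, so the argument above is mostly assembly and case-splitting. The one point carrying the conceptual weight of part (i) is the collapse used above: over a rank of uncountable cofinality, Proposition \ref{uncount_conf_no_inf_descent} pushes $\LLambda\cF$, for any nonprincipal $\cF$, all the way down to the subgroup $\aR(A)$ of almost-radiations, which already sits inside $\LLambda 2$, so that no nonprincipal ultrafilter can contribute a new maximal normal subgroup. In the countable-cofinality case this collapse fails --- Lemma \ref{Lambda_cF_count_conf}(i) is exactly what prevents it --- and the subgroups $\LLambda\cF$ genuinely enlarge the family.
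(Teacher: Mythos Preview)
Your proposal is correct and is exactly the unpacking of the paper's one-line proof (``By Lemma \ref{Lambda_cF} and Lemma \ref{Lambda_cF_count_conf}''), supplemented by Proposition \ref{uncount_conf_no_inf_descent} and the remarks from the surrounding discussion. The only cosmetic point is that in the uncountable-cofinality, nonprincipal-$\cF$ sub-case your conclusion $M=\LLambda 2$ is vacuously true---combining it with the earlier inclusion $M\le\aR(A)$ would give $\LLambda 2\le\aR(A)$, which is false---so it is more transparent to say that $M\le\aR(A)\lneq\LLambda 2$ already contradicts maximality of $M$, whence the ultrafilter produced by Lemma \ref{Lambda_cF}(iii) must be principal.
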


\begin{proof}
By Lemma \ref{Lambda_cF} and Lemma \ref{Lambda_cF_count_conf}.
\end{proof}

\section{Normal generation of the principal
congruence subgroups}

Given a basis $\cB$ of the group $A,$ an automorphism $\f \in \aut A$ is said to be {\it $\cB$-moietous}
iff there is a moiety $\cC$ of $\cB$ such that $\f$
fixes all elements of $\cC$ and preserves the
subgroup $\str{\cB \setminus \cC}$ \cite{To_Berg}.
A {\it moietous} automorphism of the group $A$
is one that is $\cB$-moietous with regard
to a suitable basis $\cB$ of $A.$

\begin{Prop} \label{tau^m}
Let $m \ge 2$ be a natural number. Then the normal
closure of any automorphism of $A$ of type $\btau^m$
is equal to the $m$-th principal congruence subgroup $\GammamA$
of $\Gamma=\aut A.$
\end{Prop}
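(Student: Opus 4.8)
The plan is to prove the equality by establishing the two inclusions $N\sle\GammamA$ and $\GammamA\sle N$, where $N$ denotes the normal closure in $\Gamma=\aut A$ of an automorphism of type $\btau^m$. The inclusion $N\sle\GammamA$ is immediate: if $\f$ has type $\btau^m$, witnessed by a basis $(x_i,y_i:i\in I)$ with $\f x_i=x_i+my_i$ and $\f y_i=y_i$, then $\f a-a\in mA$ for every $a\in A$, so $\f\in\GammamA$, and $\GammamA\trianglelefteq\Gamma$.

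For $\GammamA\sle N$ the starting point is to notice how abundant the conjugates of $\tau^m$ are. For any decomposition $A=P\oplus Q$ into two moietous summands and any isomorphism $\iota\colon P\to Q$, the automorphism $\id+m\,\iota\,\pi_P$ (with $\pi_P$ the projection onto $P$ along $Q$) is itself of type $\btau^m$ and hence lies in $N$. Exploiting this, for \emph{any} small direct summand $M$ of $A$, with $A=M\oplus M'$, an automorphism $\tau^m_{(M)}\oplus\id_{M'}$ of $A$ that acts on $M$ as an automorphism of type $\btau^m$ can be written as a product $(\id+m\,\iota_1\pi_P)(\id-m\,\iota_2\pi_P)$ for a suitably chosen moietous decomposition $A=P\oplus Q$ compatible with $M$ and isomorphisms $\iota_1,\iota_2\colon P\to Q$ differing exactly by the small map that $\tau^m_{(M)}$ induces; thus $\tau^m_{(M)}\oplus\id_{M'}\in N$, and so are all of its $\aut A$-conjugates. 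Taking $\rank M=2$ this already shows that $N$ contains every level-$m$ transvection, i.e.\ every automorphism $a\mapsto a+m\,\xi(a)\,w$ with $\xi$ dual to a basis vector and $w\in\ker\xi$; more generally, the $\aut A$-conjugates of $\tau^m_{(M)}\oplus\id_{M'}$ include $(c\,\tau^m_{(M)}\,c\inv)\oplus\id_{M'}$ for every $c\in\aut M$.

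Next, transfinite induction on the cardinal $\rank(A)$ upgrades this to: $\psi\oplus\id_{M'}\in N$ for every small direct summand $M$ of $A$ and every $\psi\in\Gamma_{\!M}(m)$. Indeed the inductive hypothesis (the present proposition applied to $M$ when $\rank M$ is infinite) lets one write $\psi$ as a product of $\aut M$-conjugates of $\tau^m_{(M)}$, which by the previous paragraph stay in $N$ after $\oplus\,\id_{M'}$; for $\rank M$ finite one enlarges $M$ harmlessly so that $\rank M\ge3$ and invokes the classical theorem of Mennicke and of Bass--Lazard--Serre that $\Gamma_{\!n}(m)$ is the normal closure of $t_{12}(m)$ in $\mathrm{GL}_n(\Z)$ for $n\ge3$ \cite{Mennicke,BLS}, the automorphisms of determinant $-1$ that occur when $m\le2$ being absorbed by enlarging $M$ to infinite rank and using perfectness of $\aut M$. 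It remains to reduce an arbitrary $\f\in\GammamA$ to one lying in such a subgroup. By a transfinite construction in the spirit of Remark \ref{BrTrIndArg} --- running $\f$ and correcting conjugates of $\tau^{\pm m}$ along the construction simultaneously, so that at each stage a further bounded portion of a basis is driven into the fixed-point set --- one produces a product $c$ of finitely many conjugates of $\tau^{\pm m}$ such that $c\f$ fixes pointwise a \emph{large} direct summand $L$ of $A$. Writing $A=L\oplus S$ with $S$ small, membership of $c\f$ in $\GammamA$ forces $c\f$ to be block-triangular with respect to the decomposition $A=S\oplus L$, namely a product of an automorphism $\s\oplus\id_L$ with $\s\in\Gamma_{\!S}(m)$ and of an automorphism that is the identity on $L$ and of the form $s\mapsto s+m\mu(s)$ on $S$ for a homomorphism $\mu\colon S\to L$; the first factor lies in $N$ by the above, and the second is supported on a (slightly enlarged) small direct summand and so lies in $N$ for the same reason. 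Hence $c\f\in N$, and since $c\in N$, also $\f\in N$.

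The step I expect to be the main obstacle is this last reduction: one must verify that the transfinite bookkeeping can be organised so that only finitely many correcting conjugates of $\tau^{\pm m}$ are ever needed and that it terminates with a fixed summand that is large, not merely moietous. This is exactly where the surplus of room available in an infinitely generated $A$ is used in an essential way, and where the argument genuinely leaves the finite-rank theory behind --- there being no finite-rank analogue of such a reduction, which is of course related to the fact that the congruence subgroup problem is subtle.
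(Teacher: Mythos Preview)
Your argument is correct through the point where you show that $\psi\oplus\id_{M'}\in N$ for every \emph{small} direct summand $M$ and every $\psi\in\Gamma_{\!M}(m)$ (the two-conjugate trick and the appeal to Mennicke/Bass--Lazard--Serre in finite rank are fine). The genuine gap is exactly where you flag it: the reduction of an arbitrary $\f\in\GammamA$ to one fixing a \emph{large} direct summand by a product of finitely many conjugates of $\tau^{\pm m}$. You describe this as a transfinite construction ``in the spirit of Remark~\ref{BrTrIndArg}'', but that remark builds a moiety of a basis one element at a time; it gives no mechanism for simultaneously assembling a bounded number of global automorphisms of $A$ whose composite corrects $\f$ on a set of corank $<\vk$. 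Nothing in the paper (or in your sketch) explains why the number of correcting conjugates should stay finite as the construction runs over $\vk$ stages, and I do not see a way to force this without already knowing what you are trying to prove. Note also that this reduction is needed already in the base case $\rank(A)=\aleph_0$ of your induction, so the induction never gets off the ground.

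The paper avoids this difficulty by asking for much less and then compensating with two outside inputs. First, it only reduces to a \emph{moietous} fixed summand: using Lemma~\ref{nongensnonrads} one finds a moietous $\rho\in\GammamA$ (not a finite product of conjugates of $\tau^m$, just an element of $\GammamA$) with $\rho^{-1}\f$ fixing a moiety of a basis $\cB$. Second, it invokes a Burns--Pi type decomposition to write any element of $\GammamA$ fixing a moiety of $\cB$ as a product of a $\cB$-moietous and a $\cB$-block-unitriangular automorphism in $\GammamA$, and then reduces moietous ones to block-unitriangular ones. The crucial step is then that every block-unitriangular $\beta\in\GammamA$ lies in $N$: writing $\beta x_i=x_i+mz_i$, the map $y_i\mapsto z_i$ is an arbitrary endomorphism of $Y$, and by Wans's theorem it is a sum of three automorphisms $\s_1,\s_2,\s_3$, giving $\beta=\s_1\tau^m\s_1^{-1}\cdot\s_2\tau^m\s_2^{-1}\cdot\s_3\tau^m\s_3^{-1}$. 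This is precisely your two-conjugate identity $(\id+m\iota_1\pi_X)(\id+m\iota_2\pi_X)=\id+m(\iota_1+\iota_2)\pi_X$ pushed one step further: two isomorphisms do not suffice to express an arbitrary endomorphism as a sum, but three do. Had you applied your trick with three conjugates and Wans's theorem rather than two, you would have obtained all block-unitriangular automorphisms directly, obviating the need for your small-$M$ induction, the appeal to the finite-rank congruence subgroup theorem, and the problematic ``large'' reduction.
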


\begin{proof} Let $\cB=(x_i,y_i : i \in I)$ be a basis
of $A$ and let the action of $\tau \in \aut A$ of type $\btau$ on $\cB$
be given by
\begin{align*}
\tau x_i &= x_i+y_i,\\
\tau y_i &=y_i
\end{align*}
for all $i \in I.$ Next, we are going to describe
a family of automorphisms of $A$ that are natural
analogues of transvections in the groups $\mathrm{SL}_n(\Z).$
An automorphism $\beta \in \aut A$ is called
{\it block-unitriangular} with respect to $\cB$ \cite{BurnsPi} if
$$
\beta y_i=y_i
$$
and
$$
\beta x_i =x_i + z_i
$$
for all $i \in I,$ where $z_i$ is an element
of the subgroup $\str{y_i : i \in I}$ (in effect,
the `matrix' of $\beta$ with regard to the basis $\cB,$
formed by the row/column coordinate `vectors'
will be unitriangular and consist of
four distinct `blocks').

It can be verified that only minor and straightforward
modifications of part (a) of the proof of Theorem 2.2 given in \cite{BurnsPi} are
required to see that the following results are
true:
\begin{itemize}
\item any $\cB$-moietous automorphism
lying the group $\GammamA$ is a product
of suitable conjugates of $\cB$-block-unitriangular
automorphisms lying the group $\GammamA;$

\item any element of $\GammamA$
which acts identically on a moiety of $\cB$ is a product
of
a $\cB$-block-unitriangular automorphism
and
a $\cB$-moietous automorphism,
both of which lie in the group $\GammamA.$
\end{itemize}

Accordingly, we will be done if we demonstrate
that
\begin{itemize}
\item[(a)] any $\cB$-block-unitriangular
automorphism in $\GammamA$
is in the normal closure of $\tau^m;$

\item[(b)] for every $\f \in\GammamA$
there exists a moietous automorphism $\rho \in \GammamA$
such that $\rho\inv \f$ fixes pointwise
a moietous direct summand of $A.$
\end{itemize}

(a) As it has been established by Wans \cite{Wans}
(see also \cite[Cor. 2.9]{Meehan}), if $M$ is a
free module of infinite rank over a PID, then any
endomorphism of $M$ is a sum of three automorphisms
of $M.$

Write $X$ for the direct summand generated
by all $x_i$ and $Y$ for the direct
summand generated by all $y_i$  $(i \in I).$
Suppose that $\beta$ is a $\cB$-block-unitriangular
automorphism lying in the group $\GammamA.$
Then
\begin{align*}
\beta x_i &= x_i + mz_i, \qquad (i \in I),\\
\beta y_i &= y_i
\end{align*}
for suitable $z_i \in Y.$ Now,
due to the result by Wans, there are automorphisms
$\s_1,\s_2,\s_3 \in \aut A,$ all of which act identically
on $X$ and preserve $Y$ and such that
$$
\s_1 y_i + \s_2 y_i + \s_3 y_i =z_i, \qquad (i \in I).
$$
Letting $\s$ denote any of the automorphisms $\s_k,$
we see that
$$
\tau \s\inv x_i = x_i + y_i \To \s \tau \s\inv x_i = x_i +\s y_i
$$
and
$$
\s \tau \s\inv y_i = \s (\s\inv y_i)=y_i
$$
for all $i \in I.$ It then easily follows that
$$
\beta= \s_1 \tau^m \s_1\inv \cdot \s_2 \tau^m \s_2\inv \cdot \s_3 \tau^m \s_3\inv.
$$

(b) Take an arbitrary element $\f \in \GammamA.$
Suppose first that $\f$ is not an almost-radiation.
Then, by Lemma \ref{nongensnonrads}, there is a basis
$(a_i,b_i,c_i : i \in I)$ of $A$ such that
$$
\f a_i = k_i a_i + m_i b_i
$$
where $k_i,m_i$ are natural
numbers and $m_i \equiv 0\Mod m$
for all $i \in I.$ Partition the
unimodular set $\cC=\{c_i : i \in I\}$ into moieties,
$$
\cC = \cC_1\sqcup \cC_2
$$
and then consider an automorphism
$\rho_0$ of the group
$B=\str{ \{a_i,b_i : i \in I\} \cup \cC_1}$
such that
$$
\rho_0 a_i =\f a_i, \quad (i \in I),
$$
whose action on $B$ is isomorphic
to the action of $\f$ on $A.$ Set then
$$
\rho =\rho_0 \oplus \id
$$
over the direct sum
$$
A = B \oplus \str{\cC_2}.
$$
Then, by the construction, $\rho\inv \f a_i=a_i,$ where $i$ runs
over $I,$ as required.

To complete the proof, suppose that $\f \in \GammamA$ is an almost-radiation.
 If $m > 2,$ then $\f$ itself acts identically
on a moietous direct summand of $A.$ In the case when $m=2,$
$\f$ may act as $-\id_M$ on a large direct summand
$M$ of $A.$ If so, to construct $\rho$ in question,
choose a basis $\cC$ of $A$ which extends
a basis of $M,$ and force $\rho$
to send to the opposite every element
in a moiety of $\cC,$ and to fix each
element in the complementing moiety.
\end{proof}

Our next goal is to introduce a family of automorphisms
of $A$ whose normal closure contains a principal
congruence subgroup of level at least two.

\begin{Lem} \label{x2x+my}
Let $m,n \ge 2$ be natural numbers and let $\{x,y\}$ be a unimodular
pair of a free abelian group $B$ of rank at least $2(n-1)$ such that
$$
B =\str x \oplus C \text{ and } y \in C.
$$
Then there is an automorphism of $B$ of order $n$ which takes $x$ to $x+my$
and preserves $C.$
\end{Lem}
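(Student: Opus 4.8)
The plan is to build the desired automorphism cyclically: I want an automorphism $\alpha$ of order $n$ which permutes a suitable set of basis vectors around a cycle of length $n$ and, when one traverses the cycle, accumulates the correction term $my$. The natural model is a companion-matrix-type construction. Since $\operatorname{rank}(B) \ge 2(n-1)$, I can extend $\{x,y\}$ to a basis of $B$ which contains $2(n-1)$ suitably chosen vectors; concretely, I would write $C = \str{y} \oplus C'$ and pick inside $\str x \oplus \str y \oplus C'$ enough room to house $n-1$ auxiliary ``shift'' vectors together with the $n-1$ vectors needed to make the construction invertible. The key point is purely local: everything happens inside a free direct summand $D$ of $B$ of rank exactly $2(n-1)$ (or $2n-2$, with $x$ one of its basis elements), and $\alpha$ will be taken to act as the identity on a direct complement of $D$, hence automatically preserving $C$ provided $D \le \str x \oplus C$ and $y \in D$.

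The concrete construction I have in mind: put $x_0 = x$ and choose unimodular elements $x_1, \dots, x_{n-1}$ so that $\{x_0, x_1, \dots, x_{n-1}\}$ is unimodular and $y \in \str{x_0,\dots,x_{n-1}}$ — actually it is cleanest to just take $x_1 = y$ and then $x_2, \dots, x_{n-1}$ further basis vectors, but one must be careful because a ``pure cyclic shift'' $x_i \mapsto x_{i+1}$ does not produce $x \mapsto x + my$. Instead I would let $\alpha$ act by
\begin{align*}
\alpha x_0 &= x_1,\\
\alpha x_i &= x_{i+1} \qquad (1 \le i \le n-2),\\
\alpha x_{n-1} &= x_0 + m y,
\end{align*}
on the span $E = \str{x_0,\dots,x_{n-1}}$, but this is a permutation-with-correction that is only invertible if $y \in \str{x_1,\dots,x_{n-1}}$ and does not have order $n$ on the nose. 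So the honest approach is: inside $D$ of rank $2(n-1)$ take two ``tracks'' $x_0, \dots, x_{n-2}$ and $y_0, \dots, y_{n-2}$, arrange $x = x_0$, $y = y_0$, and let $\alpha$ cyclically permute the $y$-track, $\alpha y_j = y_{j+1}$ (indices mod $n-1$, so order $n-1$ there — wait, I need order $n$) — the indexing must be massaged so the total order is $n$. The cleanest fix: work with $n$ vectors $v_0, \dots, v_{n-1}$ on which $\alpha$ acts as the $n$-cycle, and a second block of $n$ vectors on which $\alpha$ also acts as an $n$-cycle but twisted; set $x$ to be $v_0$ minus a telescoping sum so that applying $\alpha$ once shifts the telescoping and leaves exactly $+ my$. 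I would define $z = \sum_{j=0}^{n-1} v_j$ (the $\alpha$-fixed vector), note $\alpha v_0 = v_1$, and choose $x := v_0$, but then build $y$ out of the $w$-block so that $\alpha(v_0 + w\text{-part}) = v_0 + m y + w\text{-part}'$; the telescoping is arranged so that $w\text{-part}' - w\text{-part}$ vanishes after one step because $\alpha$ permutes the $w_j$ cyclically too. Since $\sum v_j$ and $\sum w_j$ are fixed, and the differences $v_{j+1}-v_j$, $w_{j+1}-w_j$ telescope, one gets the required single correction term.

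The main obstacle — and the step I would spend real care on — is simultaneously achieving all three requirements: that $\alpha$ genuinely has order exactly $n$ (not a proper divisor), that $\alpha$ is an automorphism (its matrix over $\Z$ is invertible, i.e. determinant $\pm 1$ — which forces the ``companion'' block to be a genuine permutation matrix plus a correction supported off the pivot column), and that $\alpha(x) = x + my$ exactly. The rank bound $2(n-1)$ is exactly what is needed: $n-1$ coordinates to run a length-$(n{-}1)$-or-$n$ cyclic shift carrying $x$, and another $n-1$ coordinates for the ``correction track'' whose cyclic shift produces the telescoping that collapses to $+my$ after one application. Once the matrix of $\alpha$ restricted to $D$ is written down explicitly as a block matrix built from cyclic permutation matrices plus one rank-one correction, order $n$ follows by direct computation of $\alpha^n$ (the permutation part returns to identity and the correction terms telescope to zero), invertibility follows since it is a permutation matrix plus a strictly upper-triangular (in the cyclic order) correction, and extending by $\id$ on a complement of $D$ in $B$ finishes the proof while preserving $C$.
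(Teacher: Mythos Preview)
Your proposal never settles on a construction, and each candidate you try runs into the same obstacle: a genuine $n$-cycle on basis vectors needs $n$ of them, so your ``two tracks each carrying an $n$-cycle'' idea requires rank $2n$, whereas the lemma only grants rank $2(n-1)$. You notice the tension (``$n-1$ coordinates to run a length-$(n-1)$-or-$n$ cyclic shift''), but a length-$(n-1)$ cycle has order $n-1$, and a length-$n$ cycle costs $n$ coordinates; neither option closes the gap. The claim that the matrix is ``a permutation matrix plus a strictly upper-triangular correction'' does not help either: such a matrix still lives in $\mathrm{GL}_n(\Z)$, not $\mathrm{GL}_{n-1}(\Z)$, and in any case the order condition fails once you perturb the permutation.

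The missing idea is that $\mathrm{GL}_{n-1}(\Z)$ already contains an element of order $n$: the companion matrix of $1+X+\cdots+X^{n-1}$, acting on $\str{e_1,\dots,e_{n-1}}$ by $e_i\mapsto e_{i+1}$ for $i<n-1$ and $e_{n-1}\mapsto -(e_1+\cdots+e_{n-1})$. The paper uses exactly this: it builds an order-$n$ automorphism $\lambda$ supported on an $(n-1)$-dimensional block and equal to the identity on the complementary $(n-1)$-dimensional block $\str{e_n,\dots,e_{2n-2}}$, and then \emph{conjugates} $\lambda$ by an explicit $\sigma\in\aut B$ so that $\sigma^{-1}\lambda\sigma$ sends $e_1$ to $e_1+m(e_n-e_{n+1})$ and preserves $\str{e_i:i>1}$. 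Since $\{e_1,e_n-e_{n+1}\}$ is a unimodular pair with $e_n-e_{n+1}\in\str{e_i:i>1}$, one transports this to the given $(x,y,C)$. So the paper's architecture is ``one order-$n$ block of size $n-1$, then conjugate,'' rather than your ``two cyclic tracks with telescoping''; the former fits the rank bound exactly, the latter does not.
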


\begin{proof} The case when $n=2$ is easy. Suppose that $n \ge 3$ and let
$$
e_1,\ldots,e_{n-1},e_n,\ldots,e_{2n-2}
$$
be a basis of $B.$ Then the automorphism $\lambda \in \aut B$
such that
$$
\lambda(e_i)=
\begin{cases}
e_{i+1}             &\text{ if } i < n-1,\\
-e_1-\ldots-e_{n-1} &\text{ if } i=n-1,\\
e_i                 &\text{ if } i > n-1.
\end{cases}
$$
has order $n.$ Next, consider the automorphism $\sigma \in \aut B$
such that
$$
\sigma(e_i)=
\begin{cases}
-me_1+e_n             &\text{ if } i =1,\\
e_i + e_{i+n-1} &\text{ if } 1 < i \le n-1,\\
e_{i-n+1}                 &\text{ if } i > n-1.
\end{cases}
$$
To illustrate the construction, in the case when $n=3,$ the matrices
of $\lambda$ and $\sigma$ are
$$
[\lambda]=
\begin{pmatrix}
0	&	-1	&	0	&	0  \\
1	&	-1	&	0	&	0  \\
0	&	0	&	1	&	0  \\
0	&	0	&	0	&	1  \\
\end{pmatrix}
\text{ and }
[\sigma]=
\begin{pmatrix}
-m	&	0	&	1	&	0  \\
0	&	1	&	0	&	1  \\
1	&	0	&	0	&	0  \\
0	&	1	&	0	&	0  \\
\end{pmatrix},
$$
respectively.

We claim that
$$
\sigma\inv \lambda \sigma (e_1)=e_1 + me_n - me_{n+1}=e_1+m(e_n-e_{n+1}).
$$
Indeed, on one hand,
$$
\lambda \sigma(e_1)=\lambda (-me_1+e_n)=-me_2+e_n.
$$
On the other hand,
$$
\s(e_1 + me_n - me_{n+1})=-me_1+e_n +me_1-me_2=-me_2+e_n,
$$
Thus $\lambda \s (e_1)=\s(e_1 + me_n - me_{n+1}),$
which proves the claim.

Next, we prove that
$$
\sigma\inv \lambda \sigma \str{e_i : i > 1} =\str{e_i : i > 1},
$$
which is evidently equivalent to the fact that $\lambda$ stabilizes
the subgroup
$$
\sigma \str{e_i : i > 1}.
$$
Clearly,
\begin{align*}
\sigma  \str{e_i : i > 1} &=\sigma \str{e_2,\ldots,e_{n-1},e_n,\ldots,e_{2n-2}}  \\
			  &=\str{e_2+e_{n+1},\ldots,e_{n-1}+e_{2n-2},e_1,\ldots,e_{n-1}} \\
			  &=\str{e_i : i \ne n}
\end{align*}
and it is then obvious that $\lambda$ stabilizes the subgroup
$\sigma  \str{e_i : i > 1},$ as required.

Now, since $\{e_1,e_n-e_{n+1}\}$ is a unimodular
pair and $e_n-e_{n+1} \in \str{e_i : i > 1},$
the result follows easily.
\end{proof}


\begin{Lem} \label{ZaUshko}
Let $A=X \oplus Y$ be a decomposition of $A$ into a
direct sum of moietuous direct summands and let $(x_i : i \in I)$
and $(y_i : i \in I)$ be bases of $X,Y,$ respectively. Consider $\rho \in \aut A$
which stabilizes $X$ and fixes $Y$ pointwise. Then the normal
closure of $\rho$ contains the automorphism $\sigma$ which acts
identically on $X$ and such that
$$
\sigma(y_i)=y_i + x_i-\rho x_i
$$
for all $i \in I.$
\end{Lem}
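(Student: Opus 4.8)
The plan is to realise $\sigma$ as a single commutator $[\theta,\rho]=\theta\rho\theta\inv\rho\inv$ for a well-chosen $\theta\in\aut A$, so that membership in the normal closure of $\rho$ is automatic, and then to read off the action of this commutator on the basis $\{x_i,y_i:i\in I\}$.

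First I would record that, since $\rho$ stabilises $X$, its restriction $\alpha=\rho|_X$ is an automorphism of $X$, so that $\rho x_i=\alpha x_i$ and $\rho y_i=y_i$ for all $i$; in particular $\rho\inv$ also fixes $Y$ pointwise and restricts to $\alpha\inv$ on $X$. Next I would introduce the block transvection $\theta\in\aut A$ determined on the basis $\{x_i,y_i:i\in I\}$ by $\theta x_i=x_i$ and $\theta y_i=y_i+x_i$. By the universal property of free abelian groups this extends to a well-defined endomorphism, and one checks on basis elements that the endomorphism $\theta'$ with $\theta' x_i=x_i$, $\theta' y_i=y_i-x_i$ is a two-sided inverse; hence $\theta$ is an automorphism and both $\theta$ and $\theta\inv$ fix $X$ pointwise.

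Then I would set $\sigma=\theta\rho\theta\inv\rho\inv$. Since $\theta\rho\theta\inv$ is a conjugate of $\rho$ and $\rho\inv$ is the inverse of $\rho$, the product $\sigma$ lies in the normal closure of $\rho$ in $\Gamma$. The remaining step is a four-fold computation on basis elements, carried out from right to left. Applying $\rho\inv,\theta\inv,\rho,\theta$ in turn to $x_i$ gives $x_i\mapsto\alpha\inv x_i\mapsto\alpha\inv x_i\mapsto x_i\mapsto x_i$, where the first two maps use that $\theta\inv$ fixes the element $\alpha\inv x_i\in X$; hence $\sigma x_i=x_i$, i.e. $\sigma$ acts identically on $X$. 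Applying the same sequence to $y_i$ gives $y_i\mapsto y_i\mapsto y_i-x_i\mapsto y_i-\alpha x_i\mapsto y_i+x_i-\alpha x_i$, where the last step uses that $\theta$ fixes $\alpha x_i\in X$; hence $\sigma y_i=y_i+x_i-\alpha x_i=y_i+x_i-\rho x_i$, exactly as required. Since the two conditions ``$\sigma$ acts identically on $X$'' and ``$\sigma y_i=y_i+x_i-\rho x_i$'' pin down an endomorphism on a basis, this $\sigma$ is the automorphism named in the statement.

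I do not expect a genuine obstacle here: the entire content is spotting that conjugating $\rho$ by the transvection $y_i\mapsto y_i+x_i$ and then multiplying by $\rho\inv$ yields precisely the desired element; once $\theta$ is written down the verification is mechanical. Note that the hypothesis that $X$ and $Y$ be moietous plays no role in this argument and is only relevant for the later applications of the lemma.
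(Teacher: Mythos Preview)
Your proof is correct and follows essentially the same approach as the paper: both produce $\sigma$ as a commutator of $\rho$ with a block transvection, and the verification on basis elements is identical in spirit. The only difference is cosmetic: the paper uses the transvection $\tau$ with $\tau x_i=x_i+y_i$ (so that the commutator $\rho\inv\tau\inv\rho\tau$ first lands in the ``wrong'' block and then has to be conjugated by the swap involution $\pi$ interchanging $x_i$ and $y_i$), whereas your choice $\theta y_i=y_i+x_i$ hits the target directly and spares that extra conjugation.
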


\begin{proof} Consider the automorphism $\tau$ defined by
$$
\begin{cases}
\tau x_i =x_i +y_i,\\
\tau y_i=y_i
\end{cases}
$$
for all $i \in I.$ Suppose that
$$
\rho x_i = \sum \alpha_{ik} x_k
$$
for all $i \in I.$ Then
$$
\tau\inv( \rho x_i) = \sum \alpha_{ik} (x_k-y_k) =\sum \alpha_{ik} x_k -\sum \alpha_{ik} y_k
$$
for all $i \in I,$ which can be rewritten as
$$
\tau\inv \rho x_i = \rho x_i - \rho^\circ y_i,
$$
where $\rho^\circ$ is the automorphism of $Y,$
an isomorphic copy of the restriction of $\rho$ on $X.$

We have that
$$
\rho\inv \tau\inv \rho \tau x_i= \rho\inv \tau\inv (\rho x_i + y_i)
$$
whence, by the above argument,
$$
\rho\inv \tau\inv \rho \tau x_i= \rho\inv( \rho x_i - \rho^\circ y_i+y_i),
$$
or
$$
\rho\inv \tau\inv \rho \tau x_i =x_i-\rho^\circ y_i+y_i.
$$
for all $i \in I.$ Finally, letting $\pi$ denote the automorphism of
$A$ which interchanges $x_i,y_i$ for all $i \in I,$
we obtain that the automorphism $\sigma = \pi (\rho\inv \tau\inv \rho \tau) \pi$
acts identically on $X$ and
$$
\sigma y_i = y_i +x_i-\rho x_i
$$
for all $i \in I.$
\end{proof}

\begin{Lem} \label{gen_o_Km_rem_one_case}
Let $m \ge 2$ be a natural number and let an automorphism $\f \in \Gamma$ act on a unimodular
set $\{x_i, y_i : i \in I\}$ of $A$ of cardinality $\vk$
so that
$$
\f x_i = x_i +m y_i
$$
for all $i \in I.$ Then the normal closure of $\f$
contains the group $\GammamA.$
\end{Lem}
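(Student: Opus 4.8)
The goal is to show that from a single automorphism $\f$ acting as $\f x_i = x_i + m y_i$ (with $y_i$ fixed, or at least not involved on the $x$-side) on a unimodular set of full cardinality $\vk$, the normal closure $\langle\!\langle\f\rangle\!\rangle$ swallows all of $\GammamA$. By Proposition \ref{tau^m} it suffices to show that $\tau^m$ lies in the normal closure of $\f$, since the normal closure of $\tau^m$ is exactly $\GammamA$. So the entire problem reduces to producing an automorphism of type $\btau^m$ inside $\langle\!\langle\f\rangle\!\rangle$.

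First I would fix a basis of $A$ extending $\{x_i, y_i : i \in I\}$; since $\{x_i, y_i : i \in I\}$ has cardinality $\vk$, the complementary part $\cC$ can be absorbed, and after conjugating by a suitable automorphism I may assume $A = X \oplus Y$ with $X = \langle x_i\rangle$, $Y = \langle y_i\rangle$ both moietous and $\f$ acting as $x_i \mapsto x_i + m y_i$, $y_i \mapsto y_i$ — i.e. $\f$ is precisely an automorphism of type $\btau^m$ already, once we arrange the extra basis vectors to be fixed. The subtlety is that in the hypothesis $\f$ need not fix the $y_i$ or the complement; it only specifies the images of the $x_i$. The standard device (exactly as in the proof of Proposition \ref{Normalgenerators_suff_part}) is to form a commutator-type product $\rho\f\rho\f$ with an involution $\rho$ chosen so that the composite preserves $X$, preserves $Y$, and acts on $X$ in a controlled way, which then forces the composite to be an honest automorphism of type $\btau^{m}$ or a close relative. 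Here $\f^2$ would act as $x_i \mapsto x_i + 2m y_i$, so I should instead combine $\f$ with a conjugate of $\f\inv$ to cancel the doubling and land back at multiplier $m$; alternatively, apply Lemma \ref{ZaUshko} with $\rho$ the relevant restriction to transfer the "$m y_i$" perturbation onto the $Y$-block cleanly.

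The cleanest route: write $A = X \oplus Y \oplus Z$ where $Z$ carries the leftover basis vectors; since $X,Y$ are moietous we may shrink and rechoose so that $\f$ literally has the block form $\mathrm{id}_Z$-nothing, and then $\f$ restricted to $X \oplus Y$ is of type $\btau^m$ on that summand. Taking a further conjugate by an involution that moves a moiety of $Z$ into play (padding out so both support and fixed-point set of the underlying "shift pattern" have cardinality $\vk$), one produces an element of $\langle\!\langle\f\rangle\!\rangle$ whose action on a full basis is exactly $x_i \mapsto x_i + m y_i$ with everything else fixed — i.e. of type $\btau^m$. Then Proposition \ref{tau^m} gives $\GammamA = \langle\!\langle\tau^m\rangle\!\rangle \le \langle\!\langle\f\rangle\!\rangle$, as required.

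**The main obstacle.** The delicate point is bookkeeping the cardinalities: the hypothesis only gives a unimodular set $\{x_i,y_i\}$ of cardinality $\vk$, not a basis, so the direct complement $W$ with $A = X \oplus Y \oplus W$ has unspecified rank (possibly $\vk$, possibly smaller), and the action of $\f$ on $W$ is completely uncontrolled — indeed $\f$ need not even preserve $X \oplus Y$. Getting from "$\f$ does the right thing on $X$" to "some element of the normal closure does exactly $\btau^m$ on a full basis and nothing else" is where all the work lies: one must conjugate $\f$ to straighten out the $Y$-block and the complement simultaneously, using an involution argument in the spirit of Proposition \ref{Normalgenerators_suff_part} together with Lemma \ref{ZaUshko} to move the perturbation onto a freshly chosen complementary moiety, and then check that the resulting automorphism is genuinely of type $\btau^m$ (in particular that the multiplier remains $m$ and is not inadvertently doubled or replaced by a divisor). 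Once that normalization is done the conclusion is immediate from Proposition \ref{tau^m}.
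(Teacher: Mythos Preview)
Your plan correctly identifies the target (manufacture an element of type $\btau^m$ inside $\nc(\f)$ and invoke Proposition \ref{tau^m}) and correctly names the obstacle: $\f$ is only specified on the $x_i$, so $\f y_i$ and $\f|_{W}$ are completely uncontrolled, and any ``straightening'' procedure in the style of Proposition \ref{Normalgenerators_suff_part} forms products $\rho\f\rho\f$ which inevitably replace the multiplier $m$ by $2m$. You flag this (``not inadvertently doubled'') but offer no mechanism to prevent it; the suggestion to pair $\f$ with a conjugate of $\f\inv$ does not work, since without knowing $\f y_i$ you have no control over $\f\inv x_i$. This is a genuine gap, not a bookkeeping detail.

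The paper's proof does not try to avoid the loss --- it accepts it and recovers at the end. The missing ingredients are:
\begin{itemize}
\item Lemma \ref{x2x+my}: for each $n\ge 2$ one builds an auxiliary $\lambda$ of order $n$ with $\lambda(x_i'+my_i')=x_i'$, so that $\f_1=(\lambda\f)^n\in\nc(\f)$ \emph{stabilizes} $X$ and, after choosing the basis $(x_i')$ suitably, acts on a moiety $X_1$ of $X$ with type $\btau^{mn}\oplus\id$ and trivially on $X_2$.
\item Only now does the involution machinery apply: one builds $\rho$ so that $\f_2=\rho\f_1\rho\f_1$ preserves $V\oplus X_1$ and is the identity on $X_2$; then Lemma \ref{ZaUshko} plus a final permutation trick yield an honest element of type $\btau^{mn}\oplus\id$, hence one of type $\btau^{mn}$, in $\nc(\f)$.
\item The decisive step your plan lacks: run this for two \emph{coprime} integers $n_1,n_2\ge 2$, obtaining $\tau^{mn_1},\tau^{mn_2}\in\nc(\f)$ for a common $\tau$ of type $\btau$; then $\tau^m\in\nc(\f)$, and Proposition \ref{tau^m} finishes.
\end{itemize}
In short, the route is ``go up to $mn$ and come back down via $\gcd$'', not ``stay at $m$ throughout''.
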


\begin{proof} Without loss of generality, we may
assume that the unimodular set $\{x_i, y_i : i \in I\}$
generates a moietous direct summand of $A.$ Choose
any basis $(x_i' : i \in I)$ of the direct
summand
$$
X=\str{x_i : i \in I}
$$
such that
$$
x_i \equiv x_i' \Mod m, \qquad (i \in I).
$$
In effect,
$$
x_i'=x_i+m x''_i
$$
for suitable elements $x''_i \in X,$ whence
$$
\f x_i = x_i+my_i = x_i+mx''_i + m(y_i-x''_i) =x_i'+my_i'
$$
for all $i \in I,$ and the set $\{x_i',y_i' : i \in I\}$
is clearly unimodular.

Now take any natural number $n \ge 2,$
and consider a basis of a direct complement
of the group $\str{x_i,y_i : i \in I}$
whose elements are presented in the form
$$
\{z_{i,k} : i \in I, k=1,\ldots,2n-2\}.
$$
Apply Lemma \ref{x2x+my} to construct an automorphism $\lambda \in \Gamma$
of order $n$ which acts on each of the subgroups
$$
\str{x_i'} \oplus \str{y_i', z_{i,1},\ldots,z_{i,2n-2}},
$$
as an automorphism of order $n$ and takes $x'_i + my_i'$ to $x'_i$
($i \in I$). We therefore see that
$$
\lambda \f x_i=x_i', \qquad (i \in I),
$$
that the element $\f_1=(\lambda \f)^n$ is in
the normal closure of $\f,$
and that the restriction
of $\f_1$ on $X$ can be, subject to the choice
of the basis $(x_i' : i \in I),$ any $n$-th power
of an element of the group $\Gamma_X(m).$
This enables us to assume that the action
of $\f_1$ on $X$ is as follows: let
$$
X=X_1 \oplus X_2
$$
be a decomposition of $X$ into a direct sum
of moietous direct summands; we then
require the action of $\f_1$ on $X_1$
be of isomorphism type of $\btau^{mn} \oplus \id$
on $A \oplus A$ and the action of $\f_1$
on $X_2$ be
equal to that one of the
identity map of $X_2.$

Let $V$ denote any direct complement of $X$ to $A.$ Clearly,
$$
A = X \oplus V = X \oplus \f_1 V,
$$
since $\f_1$ stabilizes $X.$ Our next goal is, keeping
in mind application of Lemma \ref{ZaUshko}, to find
an involution $\rho \in \aut A$ such that the element
$\f_2=\rho \f_1 \rho \f_1$ in the normal closure
of $\f$ preserves a moietous direct summand
and acts identically on its suitable direct
complement.

According to our assumptions,
 there is a basis
$(a_i,b_i,c_i : i \in I)$ of the group $X_1$
such that
\begin{align*}
\f_1 a_i &=a_i + mn b_i, \qquad (i \in I),\\
\f_1 b_i &=b_i, \\
\f_1 c_i &=c_i.
\end{align*}
We then define the action of the involution $\rho$
we have described above on $X_1$ as follows:
\begin{align*}
\rho a_i &=a_i, \qquad (i \in I),\\
\rho b_i &=c_i, \\
\rho c_i &=b_i.
\end{align*}
One then easily verifies that
\begin{align*}
\rho \f_1 \rho \f_1 a_i &=a_i+mn c_i +mn b_i, \qquad (i \in I),\\
\rho \f_1 \rho \f_1 b_i &=b_i, \\
\rho \f_1 \rho \f_1 c_i &=c_i.
\end{align*}
We then see that the restriction of $\f_2=\rho \f_1 \rho \f_1$ on $X_1$
is also of isomorphism type
$\btau^{mn} \oplus \id$
on $A \oplus A.$

Let
$
(v_i + x_{i,1} + x_{i,2} : i \in I)
$
be a basis of the group $\f_1 V,$
where $v_i \in V,$ $x_{i,1} \in X_1,$
$x_{i,2} \in X_2$ for all $i \in I.$ Complete the definition
of the involution $\rho$ by forcing it
to move each element of $X_2$ to the opposite one,
and such that
$$
\rho(v_i)=v_i+x_{i,2}
$$
for all $i \in I.$ It follows that
$$
\rho \f_1 V =\str{v_i + \rho(x_{i,1}) : i \in I} \sle V\oplus X_1.
$$
Evidently, we have as well that
$$
\rho \f_1 X_1 =X_1 \text{ and } \rho \f_1 X_2 =X_2
$$
and hence
$$
\f_2( V\oplus X_1) =\rho \f_1 \rho \f_1 ( V\oplus X_1) \sle V\oplus X_1.
$$
On the other hand, $\f_2 =\rho \f_1 \rho \f_1$ acts identically
on $X_2,$ as required.

Application of Lemma \ref{ZaUshko} to $\f_2,$ followed
by conjugation by a suitable automorphism,
demonstrates that the normal closure
of $\f$ contains an automorphism $\f_4$ whose
action on a certain basis $(e_i,f_i : i \in I)$ of $A$
is as follows:
\begin{alignat*} 2
\f_4 e_i &= e_i + mn f_i, & \qquad &(i \in I_1),\\
\f_4 e_i &= e_i + g_i,  &\qquad  &(i \in I_2),\\
\f_4 f_i &= f_i,        &        &(i \in I),
\end{alignat*}
where $I=I_1 \sqcup I_2$ is a partition
of $I$ into moieties and
$
g_i \in \str{f_i : i \in I}
$
for all $i \in I.$

Consider then a permutation $\pi$ of $I_1$
of order two which fixes no element of
$I_1$ and then the automorphism $\widetilde \pi$
of $A$ such that
\begin{alignat*} 2
\widetilde \pi e_i &=e_{\pi(i)}, & \qquad &(i \in I_1),\\
\widetilde \pi e_i &= e_i,  &\qquad  &(i \in I_2),\\
\widetilde \pi f_i &= f_i,        &        &(i \in I).
\end{alignat*}
It quickly follows that the automorphism
$$
\f_5 = \f_4 \widetilde \pi \f_4\inv \widetilde \pi
$$
in the normal closure of $\f$ fixes pointwise the
set
$$
\{e_i : i \in I_2\} \cup \{f_i : i \in I\}.
$$
Take $i \in I_1$ and write $i^*$ for $\pi(i).$
Then
\begin{alignat*} 3
\f_5 &e_i     &&= e_i     +  &&mn(f_i-f_{i^*})  \\
\f_5 &e_{i^*} &&= e_{i^*} +\, &&mn(f_{i^*}-f_{i}).
\end{alignat*}
Accordingly, $\f_5(e_i+e_{i*})=e_i+e_{i*}.$
To sum up the construction, $\f_5$
is of isomorphism type $\btau^{mn} \oplus \id$
on $A \oplus A.$ This easily implies that the normal closure
of $\f$ contains an automorphism of
type $\btau^{mn}.$

To complete the proof, take a pair of coprime
natural numbers $n_1,n_2 \ge 2.$ By the above argument,
there exists an automorphism $\tau$ of type $\btau$
such that
$$
\tau^{mn_1},\tau^{mn_2} \in \nc(\f),
$$
where $\nc(\f)$ is the normal closure of $\f.$ But then $\tau^m$ is in the normal
closure of $\f,$ and the result
follows by Proposition \ref{tau^m}.
\end{proof}

\begin{Prop} \label{norm_gens_of_Km}
Let $m \ge 2$ be a natural number and let $\f \in \Gamma$ have a unimodular set
$\{x_i,y_i : i \in I\}$ of $A$ of cardinality $\vk$ such that
$$
\f x_i = k_i x_i + my_i,
$$
where $k_i \in \Z,$ for all $i \in I.$ Then the normal
closure of $\f$ contains the group $\GammamA.$
\end{Prop}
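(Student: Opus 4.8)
The plan is to deduce the statement from Lemma~\ref{gen_o_Km_rem_one_case} by manufacturing, inside the normal closure $\nc(\f),$ an automorphism that acts as $x_i\mapsto x_i+mz_i$ on a suitable unimodular set of cardinality $\vk;$ once this is done, Lemma~\ref{gen_o_Km_rem_one_case} immediately yields $\GammamA\le\nc(\f).$ We may assume that $\f$ is not a normal generator of $\Gamma$ (otherwise $\nc(\f)=\Gamma$ already contains $\GammamA$) and, after replacing $I$ by a moiety of itself, that $\{x_i,y_i:i\in I\}$ is a basis of a moietous direct summand $W$ of $A.$

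First I would observe that $\gcd(k_i,m)=1$ for every $i\in I:$ indeed, $\f x_i=k_ix_i+my_i$ is a unimodular element of $A$ lying in the rank-$2$ direct summand $\str{x_i,y_i},$ hence it is unimodular in $\str{x_i,y_i},$ which forces $\gcd(k_i,m)=1.$ Fix integers $a_i,b_i$ with $a_ik_i+b_im=1$ and put $u_i=b_ix_i-a_iy_i.$ The change of basis $(x_i,y_i)\mapsto(\f x_i,u_i)$ of $\str{x_i,y_i}$ has determinant $-1,$ so $\{\f x_i,u_i:i\in I\}$ is again a basis of $W;$ moreover a direct computation gives $a_i(\f x_i)+mu_i=x_i,$ so $\f\inv$ sends the unimodular set $\{\f x_i:i\in I\}$ to $x_i=a_i(\f x_i)+mu_i$, i.e. it acts there as a ``block'' with $x$-coefficient $a_i\equiv k_i\inv\Mod m$ and modulus $m.$

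Next I would pick a ``fresh'' moiety $(v_i:i\in I)$ of a basis of a direct complement of $W$ in $A,$ chosen so that $\f(\str{x_i:i\in I})\oplus\str{v_i:i\in I}$ is a direct summand of $A.$ Arguing exactly as in the proof of Lemma~\ref{normsubs_finite_tuples}, one produces an automorphism $h\in\Gamma$ that fixes every $\f x_i$ and sends every $u_i$ to $v_i;$ this is legitimate because both $\f(\str{x_i:i\in I})\oplus\str{u_i:i\in I}=W$ and $\f(\str{x_i:i\in I})\oplus\str{v_i:i\in I}$ are moietous direct summands, and the prescribed assignment therefore extends to a basis of $A.$ Then $\psi:=(h\f\inv h\inv)\,\f$ lies in $\nc(\f),$ and since $h\inv(\f x_i)=\f x_i,$
$$
\psi x_i=h\bigl(\f\inv(\f x_i)\bigr)=h(x_i)=a_i(\f x_i)+mv_i=a_ik_ix_i+m(a_iy_i+v_i)=x_i+mz_i ,
$$
where $z_i=a_iy_i+v_i-b_ix_i$ (using $a_ik_i=1-b_im$). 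The coefficient vector of $z_i$ in $\str{x_i,y_i,v_i}$ has content $1,$ so $\{x_i,z_i:i\in I\}$ is a unimodular set of $A$ of cardinality $\vk.$ Applying Lemma~\ref{gen_o_Km_rem_one_case} to $\psi$ then gives $\GammamA\le\nc(\psi)\le\nc(\f),$ which completes the argument.

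I expect the main obstacle to be the construction of the conjugating automorphism $h$ — more precisely, the bookkeeping with moieties needed to guarantee that every index set involved has cardinality $\vk$ and that the subgroups $\f(\str{x_i:i\in I})\oplus\str{u_i:i\in I}$ and $\f(\str{x_i:i\in I})\oplus\str{v_i:i\in I}$ are genuine (moietous) direct summands of $A.$ This is, however, entirely of the kind already carried out in Section~1 and in the proof of Lemma~\ref{normsubs_finite_tuples}, and introduces nothing essentially new; the remaining steps are short computations with $2\times2$ and $2\times3$ integer matrices.
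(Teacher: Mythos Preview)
Your argument is correct, and it takes a genuinely different route from the paper's proof. The paper first reduces the $k_i$ modulo $m$ and uses pigeonhole to pass to a subset where $k_i\equiv k$ is constant; it then invokes Remark~\ref{cY1-cYlp1} to multiply $\ell=\varphi(m)$ conjugates of $\f,$ so that the resulting automorphism has leading coefficient $k^{\varphi(m)}\equiv 1\Mod m,$ and concludes via Lemma~\ref{gen_o_Km_rem_one_case}. You bypass both the reduction to a constant $k$ and Euler's theorem: writing $x_i=a_i(\f x_i)+mu_i$ via B\'ezout shows that $\f\inv$ already has the ``inverse'' coefficient $a_i$ on $\f x_i,$ and a single conjugate of $\f\inv$ times $\f$ collapses the coefficient to $a_ik_i\equiv 1\Mod m.$ This is shorter and handles varying $k_i$ uniformly; the paper's approach, on the other hand, fits into its general machinery of iterated conjugate products (Remark~\ref{cY1-cYlp1}), which it reuses elsewhere. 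One small point worth making explicit: the assertion that $\{x_i,z_i:i\in I\}$ is unimodular follows not merely from $z_i$ having content $1,$ but from the fact that the $v_i$-coordinate of $z_i$ is $1$ while that of $x_i$ is $0,$ so $\{x_i,y_i,z_i\}$ is a basis of $\str{x_i,y_i,v_i}.$
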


\begin{proof} Clearly, we may assume, without
loss of generality, that all $k_i$ are natural
numbers $\le m.$ This enables us to assume that
$$
\f x_i = k x_i + my_i
$$
for all $i \in I$ for a suitable natural number
$k,$ which is coprime to $m.$ Let $\ell$ denote
the value of Euler's phi function at $m.$

By Remark \ref{cY1-cYlp1}, there is a basis
$$
\cX \sqcup \cY_1 \sqcup \ldots \sqcup \cY_\ell \sqcup \cY_{\ell+1}
$$
of the group $A,$ where
$$
\cX =(x_i : i \in I) \text{ and } \cY_s=(y_{i,s} : i \in I), \qquad (s=1,2,\ldots,\ell+1).
$$
and conjugates $\f_1,\f_2,\ldots,\f_\ell$ of $\f$ such that
their product $\psi = \f_\ell \ldots \f_2 \f_1$ acts on $\cX$
as follows:
$$
\psi x_i = k^\ell x_i + k^{\ell-1}m\, y_{i,1} + \ldots + km\,y_{i,\ell-1} + m\,y_{i,\ell},
\qquad (i \in I).
$$
Since $k^\ell \equiv 1 \Mod m,$ we see that there is a
unimodular set $\{x_i,z_i : i \in I\}$ of $A$ such that
$$
\psi x_i =x_i +mz_i
$$
for all $i \in I,$ and application of Lemma \ref{gen_o_Km_rem_one_case} completes
the proof.
\end{proof}

\begin{Prop} \label{ladder_rel_one_mor_gen}
Assume that for an automorphism $\f$ of $A$ there is a
maximal natural number $m \ge 2$ such that $\f \in \LambdamA.$
Then letting $N$ denote the normal closure $\mathop{\rm nc}(\f)$
of $\f,$ we have
$$
\GammamA \le N \le \LambdamA.
$$
\end{Prop}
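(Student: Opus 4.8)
The plan is to prove the two inclusions separately. The right-hand inclusion $N \le \LambdamA$ is immediate: since $\f \in \LambdamA$ and $\LambdamA$ is a normal subgroup of $\Gamma$, its normal closure $N = \nc(\f)$ is contained in $\LambdamA$. The substance of the proposition is the left-hand inclusion $\GammamA \le N$, and for this I would aim to show that $\f$ (or, rather, some element of its normal closure) satisfies the hypothesis of Proposition~\ref{norm_gens_of_Km}: that is, there is a unimodular set $\{x_i,y_i : i \in I\}$ of cardinality $\vk$ on which a conjugate of $\f$ acts by $x_i \mapsto k_i x_i + m y_i$ with $k_i \in \Z$. Once that is in hand, Proposition~\ref{norm_gens_of_Km} delivers $\GammamA \le \nc(\f) = N$ directly.

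First I would dispose of the trivial cases. If $\f$ is a normal generator of $\Gamma$, then $N = \Gamma \supseteq \GammamA$ and we are done; so we may assume $\f$ is not a normal generator. Next I would rule out $\f$ being an almost-radiation: an almost-radiation lies in $\LLambda s$ for \emph{every} $s \ge 2$ (it acts as $\pm\id$ on a large summand, and such a summand is congruent to a scalar mod any $s$), contradicting the existence of a \emph{maximal} $m$ with $\f \in \LLambda m$. So $\f$ is neither a normal generator nor an almost-radiation, and Lemma~\ref{nongensnonrads} applies: there is a unimodular set $\{x_i,y_i : i \in I\}$ of cardinality $\vk$ and natural numbers $k_i,m_i$ with $m_i \ge 2$ such that $\f x_i = k_i x_i + m_i y_i$ and $\f \in \LLambda{m_i}$ for every $i \in I$.

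The core of the argument is then to show that, after possibly passing to a conjugate and using the maximality of $m$, one may take all the $m_i$ equal to $m$. The maximality of $m$ forces $\f \notin \LLambda s$ for every $s > m$; combined with $\f \in \LLambda{m_i}$ this tells us $m_i \le m$, but more is true. I would argue as follows: by Lemma~\ref{km-aut_in_LLs}, if $\f \in \LLambda s$ for the unimodular set above with parameters $k,m$ uniform on an index subset of cardinality $\vk$, then $m \mid s$. Conversely, I would want to extract from the data of Lemma~\ref{nongensnonrads} a uniform value. The key point: one can arrange $\f \in \LLambda{m'}$ where $m'$ is the least common multiple (or the relevant gcd-type quantity) controlling the collection $\{m_i\}$, and since each $\f \in \LLambda{m_i}$ and these summands can be intersected (the intersection of large summands is large, and a finite — or here countably-many — intersection need not remain large, so care is needed), one must be careful about which $m$ actually arises. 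I expect the cleanest route is: the maximal $m$ with $\f \in \LLambda m$ must, by Lemma~\ref{km-aut_in_LLs} applied to the set from Lemma~\ref{nongensnonrads}, satisfy $m_i \mid m$ for each $i$ on a large index subset; and since $\f \in \LLambda{m}$ means $\f$ preserves a scalar multiple mod $m$ on a large summand $M$, intersecting $M$ with the moietous summand generated by the $x_i$ produces a unimodular subset of cardinality $\vk$ on which $\f x_i = k x_i + m_i y_i$ with $m \mid (\text{the }m\text{-part})$, and one can then adjust $y_i$ (replacing it by a suitable multiple) so that the coefficient of the second basis vector becomes exactly $m$. This last adjustment works because $m_i \mid m$: writing $m = m_i d_i$, the element $y_i$ can be absorbed into a rescaled basis vector $y_i' = d_i y_i$ only if $d_i$ is a unit, which it is not in general — so instead I would note that $\f x_i \equiv k_i x_i \pmod{m_i}$ gives, after reduction mod $m$, that $\f x_i \equiv k_i x_i + (m/d_i)\cdot(\text{something}) \pmod m$, and the honest statement is that on the large summand $M \cap X$ we genuinely have $\f x \equiv k x \pmod m$ for the uniform $k$, which is precisely the hypothesis needed to run Lemma~\ref{km-aut_in_LLs}'s converse direction and feed Proposition~\ref{norm_gens_of_Km}.

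\textbf{The main obstacle} I anticipate is exactly this last calibration: reconciling the local data ``$\f x_i = k_i x_i + m_i y_i$ with $\f \in \LLambda{m_i}$'' coming from Lemma~\ref{nongensnonrads} with the global maximality ``$m$ is the largest level with $\f \in \LLambda m$'' so as to produce a \emph{single} unimodular set of full cardinality $\vk$ with second coefficient exactly $m$. The delicate issue is that the $m_i$ may genuinely vary, and that intersecting the large summand witnessing $\f \in \LLambda m$ with the span of the $x_i$ may kill most of the $x_i$ — one needs the surviving index set still to have cardinality $\vk$, which uses that the span of the $x_i$ is moietous and the witness summand is large, so their intersection is moietous (as noted in the preliminaries). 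Once the surviving index set has cardinality $\vk$, on it $\f$ acts by $x \mapsto k x + m z$ (with $z$ unimodular) since $\f x \equiv k x \pmod m$ there, and Proposition~\ref{norm_gens_of_Km} — or already Lemma~\ref{gen_o_Km_rem_one_case} after absorbing $k$ as in the proof of Proposition~\ref{norm_gens_of_Km} — finishes the argument, giving $\GammamA \le N$.
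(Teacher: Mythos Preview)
Your setup is exactly right: the inclusion $N \le \LambdamA$ is trivial, $\f$ cannot be an almost-radiation (else $\f \in \LLambda s$ for all $s$, contradicting maximality), and Lemma~\ref{nongensnonrads} hands you a unimodular set $\{x_i,y_i : i \in I\}$ of cardinality $\vk$ with $\f x_i = k_i x_i + m_i y_i$ and $\f \in \LLambda{m_i}$. You also correctly observe that maximality forces $m_i \le m$ for every $i$. But at this point you over-engineer the calibration step and never close it cleanly.

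The paper's route from here is a one-line pigeonhole. Since each $m_i$ lies in the finite set $\{2,\ldots,m\}$ and $|I|=\vk$, some value $m_0 \le m$ occurs for $\vk$ indices; on that index set one may further reduce $k_i$ modulo $m_0$ (replacing $y_i$ by $y_i + q x_i$) and pigeonhole again to get a uniform $k$. Now Lemma~\ref{km-aut_in_LLs} applies directly with $s=m$: since $\f \in \LLambda m$ and $\f x_j = k x_j + m_0 y_j$ on a set of cardinality $\vk$, it forces $m_0 \mid m$, hence $m_0 = m$. Proposition~\ref{norm_gens_of_Km} then finishes. Your alternative plan --- intersecting the large witness summand $M$ for $\f \in \LLambda m$ with $X = \langle x_i\rangle$ and working on $X \cap M$ --- is not wrong in spirit, but it has a real gap: a unimodular $x \in X \cap M$ is not one of the original $x_i$, so you only know $\f x = kx + mz$ for \emph{some} $z \in A$, and you have not shown that a basis $(x_i')$ of $X \cap M$ together with the corresponding $z_i'$ forms a unimodular set of cardinality $\vk$, which is exactly what Proposition~\ref{norm_gens_of_Km} demands. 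Patching that would essentially re-run a transfinite construction; the pigeonhole avoids all of it.
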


\begin{proof} By the conditions, $\f$ is not an almost-radiation,
and hence, by Lemma \ref{nongensnonrads}, there exists
a unimodular set $\{x_i,y_i : i \in I\}$ of $A$ of cardinality $\vk$
such that
$$
\f x_i = k_i x_i+m_i y_i
$$
where $k_i,m_i \in \N$ and $\f \in \LLambda {m_i}$ for all $i \in I.$
Again by the conditions, $m_i \le m$ for all $i \in I,$
and so there is a natural number $m_0 \le m$ satisfying
$m_i=m_0$ for $\vk$ indices $i \in I.$ We claim
that $m_0=m.$ Indeed, we have that for a suitable $k \in \N,$
$$
\f x_j = k x_j + m_0 y_j
$$
for all elements $j$ of a subset $J$ of $I$ of cardinality $\vk,$
and then $m$ divides $m_0$ by Lemma \ref{km-aut_in_LLs}. Thus $m_0=m,$ as claimed.
The result then follows by Proposition \ref{norm_gens_of_Km}.
\end{proof}

\begin{Prop}[(Ladder Relation)] \label{LadderRel}
{\rm (i)} Let $N$ be a proper
normal subgroup of $\Gamma$ which is the normal closure
of finitely many elements of $\Gamma.$
Suppose there exists a maximal natural number $m \ge 2$ satisfying $N \le \LambdamA.$
Then
$$
\GammamA \le N \le \LambdamA.
$$

{\rm (ii)} Suppose that the cardinal $\rank(A)$ is of uncountable
cofinality. If $N$ is a proper normal subgroup of $\Gamma$
which is not contained in the group of all almost-radiations,
then
$$
\GammamA \le N \le \LambdamA
$$
for a suitable natural number $m \ge 2.$
Accordingly, every normal subgroup of the
group $\aut A$ satisfies Brenner's ladder
relation.
\end{Prop}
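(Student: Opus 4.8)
For part (i), the plan is to combine Lemma \ref{normsubs_finite_tuples} with Proposition \ref{ladder_rel_one_mor_gen}. Let $\f_1,\ldots,\f_n$ be normal generators of $N$. Since $N$ is proper, none of the $\f_i$ is a normal generator of $\Gamma$, so by Lemma \ref{normsubs_finite_tuples} there is some natural number $s\ge 2$ with $\f_1,\ldots,\f_n\in\LLambda s$; this already shows that the hypothesis ``$N\le\LLambda m$ for some $m\ge 2$'' is not vacuous, so that a maximal such $m$ genuinely exists (the set of eligible $m$ being finite, since if $N\le\LLambda m$ then $m$ divides the $m_i$ produced by Lemma \ref{nongensnonrads} applied to any non-radiation generator, hence is bounded). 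The upper inclusion $N\le\LLambda m$ is the given hypothesis. For the lower inclusion $\GGamma m\le N$, I would pass to whichever generator, say $\f_1$, is not an almost-radiation (if all generators were radiations then $N$ would sit inside $\aR(A)$, which is not contained in any $\LLambda m$ in a way that makes $m$ maximal — this edge case needs a line of care), apply Proposition \ref{ladder_rel_one_mor_gen} to conclude $\GGamma{m'}\le\nc(\f_1)\le N$ where $m'$ is the maximal level with $\f_1\in\LLambda{m'}$, and then argue $m'=m$: we have $m\mid m'$ is false in general, so instead I would argue that since $\f_1\in\LLambda{m'}$ and all of $N$ lies in $\LLambda m$ with $m$ maximal for $N$, and $\nc(\f_1)\subseteq N$, maximality forces $m'\ge m$; conversely $\GGamma{m'}\le N\le\LLambda m$ combined with Lemma \ref{km-aut_in_LLs} (applied to the type-$\btau^{m'}$ elements of $\GGamma{m'}$, whose defining unimodular action has the form $x_i\mapsto x_i+m'y_i$) gives $m\mid m'$, hence $\GGamma{m'}\subseteq\GGamma m$; but we actually want $\GGamma m\subseteq N$, so the honest route is: take the maximal $m$ for $N$, apply Lemma \ref{nongensnonrads} to $\f_1$ to get the unimodular action $\f_1 x_i=k_ix_i+m_iy_i$ with $\f_1\in\LLambda{m_i}$, pick $m_0$ attained $\vk$ times, note $\f_1\in\LLambda{m_0}$ forces (via the maximality of $m$ and $\nc(\f_1)\subseteq N\subseteq\LLambda m$) that $m\mid m_0$ by Lemma \ref{km-aut_in_LLs}, while $m_0\le m$ from $\f_1\in\LLambda{m_0}\subseteq$ nothing forcing an upper bound — rather $m_0\le m$ because $m$ is maximal for $N$ and $\f_1\in N\cap\LLambda{m_0}$; hence $m_0=m$, and Proposition \ref{norm_gens_of_Km} applied to $\f_1$ yields $\GGamma m\le\nc(\f_1)\le N$.

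For part (ii), the plan is to reduce to part (i) using the uncountable cofinality hypothesis through Proposition \ref{uncount_conf_no_inf_descent}. Let $N$ be a proper normal subgroup not contained in $\aR(A)$, and pick $\f\in N$ that is not an almost-radiation. Since $N$ is proper, $\f$ is also not a normal generator of $\Gamma$. By Proposition \ref{uncount_conf_no_inf_descent}, there are only finitely many $s$ with $\f\in\LLambda s$, so the set $\{s\ge 2:\f\in\LLambda s\}$ is finite and nonempty (nonempty by Lemma \ref{nongensnonrads}, which produces some $m_i\ge 2$ with $\f\in\LLambda{m_i}$). Let $m$ be the maximal element of this set. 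Then $\nc(\f)$ has a single normal generator, so Proposition \ref{ladder_rel_one_mor_gen} applies directly and gives
\[
\GGamma m\le\nc(\f)\le\LLambda m.
\]
Since $\nc(\f)\subseteq N$, this yields $\GGamma m\le N$. For the upper bound $N\le\LLambda m$: this does \emph{not} follow from $\f\in\LLambda m$ alone, since $N$ may have other elements. Here I would invoke Lemma \ref{normsubs_finite_tuples} together with a limiting argument — but finitely-many-at-a-time is not enough for an arbitrary $N$. Instead, the cleaner route: $N/\GGamma m$ is a normal subgroup of $\Gamma/\GGamma m\cong\aut{A/mA}$ (using that $\Gamma\to\aut{A/mA}$ is onto), and the upper bound should be deduced by showing $\Gamma/\LLambda m$ is the unique simple quotient ``above'' level $m$ that $N$ can map onto nontrivially; more concretely, since $\GGamma m\le N$ and $\LLambda m/\GGamma m$ is a maximal normal subgroup of $\Gamma/\GGamma m$ (as $\LLambda p$ is maximal and $m$ is built from its prime divisors), either $N\le\LLambda m$ or $N=\Gamma$; the latter is excluded since $N$ is proper. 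This last maximality bookkeeping — that $\LLambda m/\GGamma m$ sits as the appropriate maximal normal subgroup and that no ``larger'' level is compatible — is where I expect the real work, and it is essentially a finite-index computation in $\aut{A/mA}$ reducing via CRT to the prime-power pieces; for those one uses the structure of $\aut{A/p^kA}$ and the known onto-ness results.

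The main obstacle I anticipate is pinning down, in part (ii), precisely why $N\le\LLambda m$ — that is, why the maximal level $m$ for a \emph{single} non-radiation element $\f\in N$ automatically controls \emph{all} of $N$. The honest argument is: suppose $\psi\in N$ with $\psi\notin\LLambda m$. Apply Lemma \ref{normsubs_finite_tuples} to the pair $\{\f,\psi\}$: there is $t\ge 2$ with $\f,\psi\in\LLambda t$. If $\f$ is not a radiation then by Proposition \ref{uncount_conf_no_inf_descent} $t$ lies in the finite set of levels of $\f$, so $t\mid$ (the stable value) and in fact $t\le m$; but we need $m\mid t$ or a contradiction. The resolution is that $\LLambda m\supseteq\LLambda t$ precisely when $m\mid t$, so if $\psi\in\LLambda t\setminus\LLambda m$ we need $m\nmid t$, yet $\f\in\LLambda t\cap\LLambda m$ forces $\f\in\LLambda{\mathrm{lcm}(t,m)}$, contradicting maximality of $m$ unless $\mathrm{lcm}(t,m)=m$, i.e. $t\mid m$; then $\LLambda m\subseteq\LLambda t$ — wrong direction again. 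The clean statement needed is $\LLambda a\subseteq\LLambda b\iff b\mid a$, and from $\f\in\LLambda t$, $\f\in\LLambda m$ with $m$ maximal one gets $t\mid m$ (else $\mathrm{lcm}>m$ works for $\f$), hence $\LLambda t\supseteq\LLambda m$, hence $\psi\in\LLambda t$ gives nothing. So the pairwise lemma alone is insufficient, confirming that one must go through the quotient $\Gamma/\GGamma m$ and the classification of its normal subgroups via CRT and the maximality of each $\LLambda p$ — this reduction is the crux, and I would carry it out by noting $\GGamma m\le N$ is already established, passing to $\overline N=N/\GGamma m\trianglelefteq\overline\Gamma=\Gamma/\GGamma m$, and showing the only normal subgroups of $\overline\Gamma$ properly containing the trivial one and not equal to $\overline\Gamma$ itself that arise are contained in $\LLambda m/\GGamma m$, which follows from the finite-group structure of $\aut{A/mA}$ together with the results of \cite{BrMa,BurnsFa} on surjectivity.
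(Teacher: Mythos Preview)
Your proposal has genuine gaps in both parts, and in each case the paper supplies a specific idea that you are missing.

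\textbf{Part (i).} Your ``honest route'' applies Lemma~\ref{nongensnonrads} to a \emph{single} generator $\f_1$, obtains an action coefficient $m_0$ realized $\vk$ times, and then claims $m_0\le m$ ``because $m$ is maximal for $N$ and $\f_1\in N\cap\LLambda{m_0}$.'' This inference is invalid: $\f_1\in\LLambda{m_0}$ says nothing about the \emph{other} generators, so it does not force $N\subseteq\LLambda{m_0}$, and maximality of $m$ gives no bound on $m_0$. Concretely, if $\f_1$ has all action coefficients equal to $6$ and $\f_2$ has all action coefficients equal to $10$, then $m=2$ but your $m_0$ for $\f_1$ is $6$; Proposition~\ref{norm_gens_of_Km} applied to $\f_1$ only yields $\GGamma 6\le N$, which is strictly weaker than the required $\GGamma 2\le N$. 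The paper's proof avoids this by using Remark~\ref{cY1-cYlp1} to build a product $\psi=\f_n'\cdots\f_1'$ of conjugates of \emph{all} the generators, whose action coefficient on each $x_i$ is $m_i=\gcd(m_i^1,\ldots,m_i^n)$. Since each $\f_s\in\LLambda{m_i^s}$ implies $N\subseteq\LLambda{m_i}$, maximality of $m$ gives $m_i\le m$; then one picks $m_0\le m$ attained $\vk$ times, and Lemma~\ref{km-aut_in_LLs} (applied to $\psi\in N\subseteq\LLambda m$) gives $m\mid m_0$, whence $m_0=m$. The simultaneous use of all generators is essential.

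\textbf{Part (ii).} You correctly get $\GGamma m\le N$ for the maximal level $m$ of a single non-radiation $\f\in N$, and you correctly identify that the upper bound $N\le\LLambda m$ does not follow. Your proposed rescue via the quotient $\Gamma/\GGamma m$ and an appeal to maximality of $\LLambda m/\GGamma m$ does not work as stated: for composite $m$ the subgroup $\LLambda m$ is not maximal in $\Gamma$ (e.g.\ $\LLambda 6\subsetneq\LLambda 2$), so the dichotomy ``either $N\le\LLambda m$ or $N=\Gamma$'' fails. The paper's argument is much cleaner and sidesteps any quotient analysis: define $\mu(\f_1,\ldots,\f_k)$ to be the maximal $s$ with $\nc(\f_1,\ldots,\f_k)\le\LLambda s$ (finite by Proposition~\ref{uncount_conf_no_inf_descent}), choose a tuple $(\psi_1,\ldots,\psi_k)$ in $N$ minimizing $\mu$, and set $m_0=\mu(\psi_1,\ldots,\psi_k)$. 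For any $\f\in N$ one has $m_0\le\mu(\psi_1,\ldots,\psi_k,\f)\le m_0$, hence $\f\in\LLambda{m_0}$; thus $N\le\LLambda{m_0}$, and part~(i) applied to the minimizing tuple gives $\GGamma{m_0}\le N$. The key device you are missing is this minimization over finite tuples, which simultaneously pins down the correct level and delivers the upper inclusion for free.
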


\begin{proof} (i) Suppose that $N$ is the normal
closure of automorphisms $\f_1,\f_2,\ldots,\f_n$
from $\Gamma.$ Due to the maximality condition
in the hypothesis, there exist $\f_i$ which
are not almost-radiations, and indeed we
may assume that all $\f_i$ are not almost-radiations.
Since the case when $n=1$
is covered by Proposition \ref{ladder_rel_one_mor_gen},
assume that $n \ge 2.$
Applying the maximality condition on $m$
once again, we see that
\begin{equation}
\f_1 \in \LLambda {s_1}, \f_2 \in \LLambda {s_2},\ldots, \f_n \in \LLambda {s_n}
\To
\gcd(s_1,s_2,\ldots,s_n) \le m
\end{equation}
for all $s_1,s_2,\ldots,s_n \ge 2.$

By Remark \ref{cY1-cYlp1}, for every $i=1,2,\ldots,n,$ there is a conjugate
$\f_i'$ of $\f_i$ such that the automorphism
$\psi=\f_1' \f_2' \ldots \f_n'$ acts on a suitable
unimodular set $\{x_i,z_i : i \in I\}$ of $A$ of cardinality $\vk$ so that
$$
\psi x_i =k_i x_i + m_i z_i, \qquad (i \in I),
$$
where $k_i$ and $m_i \le m$ are natural numbers
(the last inequality is due to (\theequation) and \eqref{gcd_in_prod_o_conjs} in Remark \ref{cY1-cYlp1}).
Consequently, there exists a natural number $m_0 \le m$
such that for an appropriate $k \in \N$ and for an appropriate subset $J$ of $I$
of cardinality $\vk,$
$$
\psi x_j = k x_j + m_0 z_j
$$
for all $j \in J.$
According to Lemma \ref{km-aut_in_LLs},
$m$ divides $m_0,$ and hence $m_0=m.$ Then,
by Proposition \ref{norm_gens_of_Km},
$$
\GGamma m=\GGamma{m_0} \le N \le \LambdamA,
$$
and we are done.

(ii) Given a finite tuple $\f_1,\f_2,\ldots,\f_n$
of elements of $N,$ set $\mu(\f_1,\f_2,\ldots,\f_n)$ to be the
maximal natural number $m$ such that
$$
\mathop{\rm nc}(\f_1,\f_2,\ldots,\f_n) \le\LambdamA,
$$
or $\infty,$ otherwise. Since $N$ contains
automorphisms of $A$ which are not almost-radiations,
we have, by Proposition \ref{uncount_conf_no_inf_descent},
that the function $\mu$ takes finite values
at some finite tuples of $N.$ Choose then a finite tuple $\psi_1,\psi_2,\ldots,\psi_k$
of $N$ at which the value of $\mu$ is minimal.
Consequently,
$$
m_0=\mu(\psi_1,\psi_2,\ldots,\psi_k) \le \mu(\psi_1,\psi_2,\ldots,\psi_k,\f) \le \mu(\psi_1,\psi_2,\ldots,\psi_k)
$$
for all $\f \in N.$
Thus
$N \le \LLambda {m_0}.$
Applying (i) to the tuple $\psi_1,\psi_2,\ldots,\psi_k,$ we see that
$$
\GGamma{m_0} \le N \le \LLambda{m_0}.
$$

Now in order to show that an arbitrary normal subgroup $K$ of
the group $\aut A$ satisfies Brenner's ladder
relation, observe that if $K=\aut A,$ then
$$
\GGamma 1 =K= \LLambda 1=\aut A,
$$
and that if all elements of $K$ are almost-radiations,
then
$$
\{\id_A\}=\GGamma 0 \le K \le \LLambda 0=\aR(A).
$$
\end{proof}

It is not hard to show that part (ii) of
Proposition \ref{LadderRel} does not
hold in the case when the cardinal $\rank(A)$
is of countable cofinality. Indeed,
use Remark \ref{in_inf_many_Lambdas} to construct
for every prime number $p > 2$ an automorphism
$\f_p$ of $\aut A,$ which is not an almost-radiation, determined by the set $\mathbf P \setminus \{p\}$
of prime numbers. Set then
$$
N =\mathrm{nc}( \f_p : p \in \mathbf P \setminus \{2\}).
$$
By the construction, $N \le \LLambda 2,$
and $m=2$ is the maximal natural number
satisfying $N \le \LambdamA.$
Now if an automorphism $\rho$ of type $\btau^2$
is contained in $N,$ we get that
$$
\GGamma 2 \le \mathrm{nc}(\f_{p_1},\ldots,\f_{p_s})
$$
for suitable primes $p_i > 2.$ However,
if $q > 2$ is a prime number which exceeds
all the primes $p_i,$ we obtain that
$$
\GGamma 2 \le \nc(\f_{p_1},\ldots,\f_{p_s}) \le \LLambda q,
$$
which is impossible.

\end{document}